\DeclareMathOperator{\blkdiag}{blkdiag}
\DeclareMathOperator{\col}{col}
\newtheorem{thm}{Theorem}
\newtheorem{lemma}{Lemma}
\newtheorem{defn}{Definition}
\newtheorem{remark}{Remark}
\newtheorem{asmp}{Assumption}
\newcommand{\reals}{\mathbb{R}}
\tikzstyle{block} = [draw, fill=white, rectangle, 
\tikzstyle{sum} = [draw, fill=white, circle, node distance=0.5cm, inner sep=0pt, minimum size=0.25cm]
\tikzstyle{input} = [coordinate]
\tikzstyle{output} = [coordinate]
\tikzstyle{pinstyle} = [pin edge={to-,thin,black}]
\tikzstyle{branch}=[fill,shape=circle,minimum size=3pt,inner sep=0pt]
\begin{document}

\title{\LARGE{Dynamic NE Seeking for Multi-Integrator Networked Agents with Disturbance Rejection}}

\author{Andrew R.~Romano and Lacra Pavel%
\thanks{This work was supported by NSERC Grant (261764). 
	A.R. Romano and L. Pavel are with Department of Electrical and Computer Engineering, University of Toronto, Canada. 
	{\tt\small andrew.romano@mail.utoronto.ca, pavel@ece.utoronto.ca}}%
}

\maketitle

\begin{abstract}
In this paper, we  consider  game problems played by (multi)-integrator agents, subject to external disturbances. We propose Nash equilibrium seeking dynamics based on gradient-play, augmented with a dynamic internal-model based component, which is a reduced-order observer of the disturbance. We consider single-, double- and extensions to multi-integrator agents, 
in a partial-information setting, where agents have only partial knowledge on the others' decisions over a network. The lack of global information is offset by each agent maintaining an estimate of the others' states, based on local communication with its neighbours.  
Each agent has an additional dynamic component that drives its estimates to the consensus subspace. In all cases, we show convergence to the Nash equilibrium irrespective of disturbances. Our proofs leverage input-to-state stability under strong monotonicity of the pseudo-gradient and Lipschitz continuity of the extended pseudo-gradient.
\end{abstract}

\section{Introduction}
Game theory has 
found many applications in multi-agent engineering problems, wherein each agent can be modelled as an independent, selfish decision maker that tries to optimize its  individual, but coupled, cost function. These include
 wireless communication networks \cite{faawb06}, \cite{lh10}, \cite{ch12}, 
  optical networks \cite{lp12}, \cite{lp06}, 
 smart-grid and PEV charging \cite{mwjsl10}, \cite{sg16c}, \cite{hi13} noncooperative flow control \cite{ysm11}, \cite{ab05} and multi-agent formation problems\cite{lqs14}. 
 The relevant equilibrium sought is the Nash equilibrium (NE), whereby no agent has incentive to unilaterally change its action.  
 The objective is to design either continuous-time or discrete-time, distributed learning schemes that converge to the NE under reasonable assumptions on the game properties and agent knowledge. Most works focus on algorithms for agents that either do not have dynamics, 
 or have single integrator dynamics, and disturbances are not explicitly considered, \cite{FKB12}. 

There are many scenarios when the game or the agents are subject to disturbances, noise or uncertainties. Examples are demand-side management in smart-grids,  with changes in the energy consumption demand, \cite{mwjsl10}, feedback control for PEV charging load allocation,  \cite{hi13}, or power control for optical-signal-to-noise ratio (OSNR) in the presence of pilot tones, \cite{tp06}. Yet there have been relatively few works on Nash equilibrium seeking in such settings. 
In \cite{hi13}, a time-varying pricing function that affects the cost functions of each agent is considered. Only robustness to the time-varying component is investigated. 
Another good motivating example is the case of a group of mobile robots in a sensor network, similar to the examples in \cite{sjs12}. Each agent in the network has a goal related to its global position. 
However, it must also consider its position relative to the other agents in the network in order to make sure that it maintains communication with its neighbours. This can easily be formulated as a game played by the robots, which can be modelled as higher-order agents. In addition, each robot may be subject to a disturbance, e.g., wind or a slope in the terrain. It is important that these robots be able to reject this deterministic disturbance and still converge to the NE. A similar problem without disturbances was presented in \cite{zm13}, however the state space is discretized and the game is treated as a finite action game. This formulation ignores the dynamics of the individual agents.


Motivated by the above, in this paper our focus is to extend these results to games wherein the agents are modelled as (multi)-integrator systems subject to external deterministic disturbances. This is related to NE seeking with noisy feedback, on which there has been recent work. A dual-averaging algorithm with noisy gradients is considered in \cite{ms17}. A discrete-time extremum seeking algorithm with noisy cost measurements for agents modelled as single and double integrators and kinematic unicycles is investigated in \cite{sjs12}. In both of these papers, the noise involved is stochastic in nature instead of a deterministic disturbance, as we consider here. Separately, NE seeking in the special class of aggregative games for Euler-Lagrange systems has been recently investigated in \cite{dl19},  which is similar to our work due to the dynamic nature of the agents involved, but without disturbances being considered.

Our work is related to the literature on disturbance rejection and tracking in multi-agent systems, \cite{bdp15}, \cite{dpj14}, \cite{zd13}, \cite{xwhj16}, 
\cite{xlh17}.  
Most output regulation problems in multi-agent systems 
can be viewed as specific cases of game theoretical problems. The synchronization problem, for example, can be regarded as a special game where each agent's cost function is quadratic and corresponds to the sum of the squared distances to all of its neighbours. 

Our work is also related to distributed optimization, where a group of agents  cooperatively minimize a global cost function, the sum of the agents' individual cost functions. Optimization schemes that reject disturbances have been discussed for single integrator systems \cite{wyh14}, 
systems with unit relative degree \cite{whj16} and systems with double integrator dynamics \cite{twy18}. In \cite{CortesSICON2016}  the robustness of a continuous-time distributed optimization algorithm  is analyzed in the presence of  additively persistent noise on agents' communication and computation, in a directed communication graph. 
Key differences from a game setup are the cooperative nature of the problem and the fact that usually each agent's cost is decoupled of the others' variables. 
Exploiting summability, leads to a set of parallel decoupled optimization problems, one for each agent and its own cost function. 
Even when the overall cost is not separable, due to its summable structure, one can extend the problem to an augmented space of estimates, where it becomes separable and convex. In a game context, an agent's cost is inherently \emph{coupled} to the others' decisions, on which it does not have control and convexity is only partial.  

\textit{Contributions. } 
 Motivated by the above, in this paper we consider how to design Nash equilibrium seeking dynamics  
  that simultaneously reject exogenous  disturbances. We consider single and double-integrator agents, i.e., agents that behave as continuous-time dynamical systems that integrate their respective inputs, 
 in a partial-information setting, i.e., networked regimes where agents may only access the states of their neighbours. We also discuss extensions to multi-integrator agents. 
Unlike  multi-agent set stabilization problems with disturbance rejection, herein the stabilization goal is the a priori unknown Nash equilibrium of the game, which  has to be reached irrespective of  disturbances. 
In all cases, we make standard assumptions that provide existence and uniqueness of the NE of the game.


Due to the partial-information setting, we inspire from the disturbance-free  results in 
\cite{gp18}. 
Each player keeps track of an  estimate of the others' decisions as in  \cite{gp18}, and the problem can be seen as one of  multi-agent agreement with disturbance rejection. The agreement subspace is the estimate-consensus subspace at the Nash equilibrium, irrespective of the disturbance. The proposed agent learning dynamics has two components: a gradient-play with estimate consensus component (that drives each player's dynamics towards minimizing its own cost function) and a dynamic internal-model component, which effectively implements a reduced-order observer of the disturbance. Unlike  typical multi-agent agreement, \cite{bdp15}, \cite{dpj14},  \cite{xwhj16}, 
we cannot use individual passivity of each agent.  
Rather, our proofs rely on combining input-to-state stability with  design of a reduced-observer for  disturbance, under strong monotonicity of the pseudo-gradient and  Lipschitz continuity of the extended pseudo-gradient.    
The resulting  agent dynamics are locally distributed, with coupling introduced only through the communication graph. 



The paper is organized as follows. 
In Section \ref{sec:background}, we give the necessary background on nonlinear systems, graph theory and noncooperative game theory. In Section \ref{sec:problem}, we formulate the NE seeking problem for dynamic agents with disturbance rejection. In Section \ref{sec:SingleIntegrator}, we give our results for NE seeking dynamics with disturbance rejection for single-integrator agents. 
In Section \ref{sec:second_integrator}, we formulate a NE seeking algorithm for double-integrator agents and discuss extensions to multi-integrator agents. In Section \ref{sec:simulations}, we compare by simulation their performance with those of a standard gradient-play dynamics and an augmented gradient-play dynamics with estimate consensus (partial information setting), and give conclusions in Section \ref{sec:conclusions}. 
A short version of this work appeared in \cite{AR_LP_CDC2018}, 
where only single-integrators are treated. 

\emph{Notations.} Let $\reals$, $\reals_{\geq 0}$ denote the set of real and non-negative real numbers, $\mathbb{C}$ and $\mathbb{C}^-$ the set of complex numbers and complex numbers with negative real part. 
Given $x, y \!\in  \!\reals^n$, $x^Ty$ denotes the inner product of $x$ and $y$. Let $\|\cdot\|\!:\!\reals^n \!\rightarrow \!\reals_{\geq 0}$ denote the Euclidean norm and $\|\cdot\|\!:\!\reals^{m\times n} \!\rightarrow \!\reals_{\geq 0}$ denote the induced matrix norm. 
$\col(x_1,\dots,x_N)$  denotes $[x_1^T,\dots,x_N^T]^T$. Given matrices $A_1,\dots,A_N$, $\blkdiag(A_1,\dots,A_N)$ denotes the block diagonal matrix with $A_i$ on the diagonal. $I_n$ denotes the $n\!\times \! n$ identity matrix. $\boldsymbol{1}_n$ denotes the $n\!\times \!1$ all ones vector.  
$A\otimes B$ denotes the Kronecker product of matrices $A$ and $B$. 

\vspace{-0.25cm}
\section{Background}\label{sec:background}

\subsection{Input to State Stability}

In this work, we model the dynamics of each agent as a continuous time dynamical system. We first introduce some background from \cite{hk02}. Consider a nonlinear system, 
\vspace{-0.23cm}\begin{align} \label{eq:NLSys}
\dot x = f(x,u)
\end{align}
where $\dot x:=\frac{dx(t)}{dt}$, $f\!:\!\reals^n \!\times \!\reals^m \!\rightarrow \!\reals^n$ is locally Lipschitz in $x$ and $u$ and the input $u(t)$ is a piecewise continuous, bounded function. 
\begin{defn} \label{defn:ISS}
System (\ref{eq:NLSys}) is input-to-state stable (ISS) if there exist $\beta \in \mathcal{KL}$ and $\gamma \in \mathcal{K}$ such that for any initial state $x(t_0)$ and any bounded input $u(t)$, the solution $x(t)$ satisfies 
\vspace{-0.25cm}\begin{align*}
\|x(t)\| \leq \beta(\|x(t_0)\|,t-t_0)+\gamma\Big(\sup_{t_o\leq\tau\leq t} \|u(\tau)\|\Big), \, \,\forall t\geq t_0
\end{align*}
\end{defn}

\begin{thm} \label{thm:ISSLyapunov}
(Theorem 4.19, \cite{hk02}) Let $V(x)$ be a continuously differentiable function such that
\vspace{-0.23cm}\begin{align*}
\alpha_1(\|x\|)&\leq V(x)\leq\alpha_2(\|x\|)\\
\frac{\partial V}{\partial x}f(x,u) &\leq - W(x),\ \forall \|x\| \geq \rho(\|u\|) > 0
\end{align*}
$\forall x \!\in \!\reals^n,\ u \!\in \!\reals^m$, where $\alpha_1,\alpha_2  \!\in \!\mathcal{K}_\infty, \!\rho \! \in \! \mathcal{K}$, and $W(x)$ is 
positive definite. Then system (\ref{eq:NLSys}) is ISS with $\gamma  \!=\! \alpha_1^{-1}\! \circ \! \alpha_2 \! \circ  \!\rho$.
\end{thm}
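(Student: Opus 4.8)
The plan is to verify the definition of ISS (Definition~\ref{defn:ISS}) directly, by using $V$ to confine the trajectories of \eqref{eq:NLSys} to a residual sublevel set whose size is controlled by the input, and to obtain a $\mathcal{KL}$ decay toward that set from the comparison lemma. Fix any bounded input and set $r := \sup_{\tau \geq t_0}\|u(\tau)\|$ and $\mu := \rho(r)$. Since $\rho \in \mathcal{K}$ is increasing, $\rho(\|u(t)\|) \leq \mu$ for all $t \geq t_0$, so the decay hypothesis $\frac{\partial V}{\partial x}f(x,u) \leq -W(x)$ holds in particular whenever $\|x\| \geq \mu$.

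First I would convert this region condition into a sublevel-set condition via the sandwich bound $\alpha_1(\|x\|) \leq V(x) \leq \alpha_2(\|x\|)$. Consider $\Omega := \{x : V(x) \leq \alpha_2(\mu)\}$. If $V(x) > \alpha_2(\mu)$, then $\alpha_2(\|x\|) \geq V(x) > \alpha_2(\mu)$, hence $\|x\| > \mu \geq \rho(\|u(t)\|)$ and therefore $\frac{\partial V}{\partial x}f(x,u) \leq -W(x) < 0$. Thus $V$ is strictly decreasing while the trajectory lies outside $\Omega$, so $\Omega$ is positively invariant and the whole trajectory stays in the sublevel set $\{V \leq \max\{V(x(t_0)), \alpha_2(\mu)\}\}$, which by the lower bound is a compact ball; in particular the solution is defined for all $t \geq t_0$.

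Next, to produce the $\mathcal{KL}$ transient estimate I would bound $W$ below in terms of $V$. Restricting attention to the compact ball just obtained, I would use the standard fact that a continuous positive-definite $W$ admits a class-$\mathcal{K}$ minorant there, $W(x) \geq \alpha_W(\|x\|)$; combined with $\|x\| \geq \alpha_2^{-1}(V(x))$ this gives $\frac{\partial V}{\partial x}f(x,u) \leq -\alpha_3(V)$ with $\alpha_3 := \alpha_W \circ \alpha_2^{-1} \in \mathcal{K}$, valid whenever $V(x) \geq \alpha_2(\mu)$. Applying the comparison lemma to $\dot v = -\alpha_3(v)$ then yields a class-$\mathcal{KL}$ function $\sigma$ with $V(x(t)) \leq \sigma(V(x(t_0)), t-t_0)$ while the trajectory is outside $\Omega$, and $V(x(t)) \leq \alpha_2(\mu)$ once it is inside; merging the two phases gives $V(x(t)) \leq \max\{\sigma(V(x(t_0)), t-t_0), \alpha_2(\mu)\}$ for all $t \geq t_0$.

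Finally I would translate back through $\alpha_1$: using $\|x\| \leq \alpha_1^{-1}(V(x))$, $V(x(t_0)) \leq \alpha_2(\|x(t_0)\|)$, and $\max\{a,b\} \leq a+b$ gives $\|x(t)\| \leq \alpha_1^{-1}(\sigma(\alpha_2(\|x(t_0)\|), t-t_0)) + \alpha_1^{-1}(\alpha_2(\mu))$. Setting $\beta(s,t) := \alpha_1^{-1}(\sigma(\alpha_2(s), t)) \in \mathcal{KL}$ and recalling $\mu = \rho(r)$ recovers exactly the estimate of Definition~\ref{defn:ISS} with gain $\gamma = \alpha_1^{-1} \circ \alpha_2 \circ \rho$. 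I expect the middle step to be the main obstacle: extracting a genuine class-$\mathcal{K}$ lower bound on $W$ so that the comparison lemma applies. This is clean provided one first establishes boundedness (which needs only $\dot V \leq 0$ on $\{V > \alpha_2(\mu)\}$, not the refined bound) and then works on the resulting compact set; the remaining care is bookkeeping the two-phase $\mathcal{KL}$ estimate and converting the max-form bound into the additive form of Definition~\ref{defn:ISS}.
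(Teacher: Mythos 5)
The paper does not actually prove this statement: it is imported verbatim as Theorem 4.19 of \cite{hk02} (Khalil), so there is no in-paper argument to compare against, and your sketch is essentially the textbook proof of that theorem. The structure is right in all essentials: confinement of the trajectory to the sublevel set $\{V \le \max\{V(x(t_0)),\alpha_2(\rho(\sup\|u\|))\}\}$ (which also gives forward completeness), a class-$\mathcal{K}$ minorant of $W$ on the resulting compact set, the comparison lemma to obtain a $\mathcal{KL}$ decay while outside $\Omega$, and the max-to-sum conversion producing exactly the gain $\gamma = \alpha_1^{-1}\circ\alpha_2\circ\rho$. One point you should tighten: as written, the minorant $\alpha_W$, hence $\alpha_3$, $\sigma$, and your $\beta$, are built on a compact set that depends on $x(t_0)$ and on $u$, whereas Definition \ref{defn:ISS} requires a single pair $(\beta,\gamma)$ fixed before the initial state and input are chosen. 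This is repairable by standard bookkeeping---for each level $c$ choose the minorant on $\{V\le c\}$ so that it varies monotonically in $c$ and assemble one $\mathcal{KL}$ function from the comparison-lemma solutions (this is precisely what the global version of Theorem 4.18 in \cite{hk02} packages)---but it is a genuine step, not automatic from ``a minorant exists on each compact set.'' With that uniformity addressed, the two-phase merge and the additive form of the estimate go through exactly as you describe.
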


Consider now the cascade of two systems
\vspace{-0.28cm}\begin{align} \label{eq:NLSysCascade1}
\dot x_1 &= f_1(x_1,x_2)\\
\label{eq:NLSysCascade2}
\dot x_2 &= f_2(x_2)
\end{align}
with $f_1 \!: \!\reals^{n_1}  \!\times  \!\reals^{n_2}  \!\rightarrow  \!\reals^{n_1}$, $f_2 \!: \!\reals^{n_2}  \!\rightarrow  \!\reals^{n_2}$ locally Lipschitz. 
\begin{lemma} \label{lemma:ISSCascade}
(Lemma 4.7, \cite{hk02}) If the system (\ref{eq:NLSysCascade1}) with $x_2$ as an input is ISS and the origin of (\ref{eq:NLSysCascade2}) is globally uniformly asymptotically stable, then the origin of the cascade system (\ref{eq:NLSysCascade1}) and (\ref{eq:NLSysCascade2}) is globally uniformly asymptotically stable.
\end{lemma}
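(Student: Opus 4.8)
The plan is to combine the ISS estimate available for the $x_1$-subsystem with the $\mathcal{KL}$ decay estimate guaranteed by GUAS of the $x_2$-subsystem, and to show that the full state $x=\col(x_1,x_2)$ enjoys both uniform stability and global uniform attractivity, which together are equivalent to GUAS of the cascade. First I would record the two bounds. Since (\ref{eq:NLSysCascade1}) with $x_2$ as input is ISS, Definition \ref{defn:ISS} provides $\beta_1\in\mathcal{KL}$, $\gamma_1\in\mathcal{K}$ with
\[
\|x_1(t)\|\leq \beta_1(\|x_1(t_0)\|,t-t_0)+\gamma_1\Big(\sup_{t_0\leq\tau\leq t}\|x_2(\tau)\|\Big).
\]
Since the origin of (\ref{eq:NLSysCascade2}) is GUAS, there is $\beta_2\in\mathcal{KL}$ with $\|x_2(t)\|\leq\beta_2(\|x_2(t_0)\|,t-t_0)$. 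As $\tau\mapsto\beta_2(\cdot,\tau-t_0)$ is nonincreasing, one has $\sup_{t_0\leq\tau\leq t}\|x_2(\tau)\|\leq\beta_2(\|x_2(t_0)\|,0)=:\sigma(\|x_2(t_0)\|)$ with $\sigma\in\mathcal{K}$.

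For uniform stability and boundedness, substitution yields $\|x_1(t)\|\leq\beta_1(\|x_1(t_0)\|,0)+\gamma_1\circ\sigma(\|x_2(t_0)\|)$, which together with the $\beta_2$-bound on $x_2$ shows that $\|x(t)\|$ is dominated, for all $t\geq t_0$, by a class-$\mathcal{K}$ function of $\|x(t_0)\|$. This gives uniform stability of the origin of the cascade and uniform boundedness of trajectories.

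The crux is global uniform attractivity of $x_1$: naively the supremum of the input over $[t_0,t]$ carries no decay, so one cannot conclude $x_1\to0$ directly. To remedy this I would exploit causality and split the horizon at the midpoint $s:=t_0+(t-t_0)/2$, reapplying the ISS estimate on $[s,t]$ with $x_1(s)$ as initial condition:
\[
\|x_1(t)\|\leq\beta_1(\|x_1(s)\|,t-s)+\gamma_1\Big(\sup_{s\leq\tau\leq t}\|x_2(\tau)\|\Big).
\]
Because $\|x_1(s)\|$ is uniformly bounded by the stability estimate while $t-s\to\infty$, the first term vanishes as $t\to\infty$; because $x_2(\tau)\to0$ and the window $[s,t]$ recedes to $+\infty$, the second term vanishes as well. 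Hence $\|x_1(t)\|\to0$, uniformly over bounded sets of initial conditions and uniformly in $t_0$. Combined with $\|x_2(t)\|\to0$ from GUAS, this establishes global uniform attractivity of the cascade, and with uniform stability this yields GUAS of the origin of (\ref{eq:NLSysCascade1})--(\ref{eq:NLSysCascade2}).

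I expect this attractivity step to be the main obstacle: the input term in the ISS bound involves the supremum of $x_2$, which does not decay on its own, so the argument must invoke the semigroup/causality structure to re-center the estimate on a receding time window where both the $\mathcal{KL}$ memory term and the residual input become small simultaneously. The remaining work---checking that the relevant sums and compositions stay in the classes $\mathcal{K}$ and $\mathcal{KL}$, and packaging uniform stability together with uniform global attractivity into a single $\mathcal{KL}$ estimate for $\|x\|$---is routine.
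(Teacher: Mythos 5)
Your proof is correct and takes essentially the same route as the source the paper relies on: the paper states this lemma without proof, citing Lemma 4.7 of \cite{hk02}, and Khalil's proof there uses exactly your midpoint-splitting device --- re-applying the ISS estimate from the intermediate time $s = t_0 + (t-t_0)/2$ so that the $\mathcal{KL}$ memory term and the $\gamma_1$-term (driven by the decaying tail of $x_2$) vanish simultaneously, after first securing uniform stability and boundedness from the composed estimates. No gaps; the causality/re-centering step you flag as the crux is handled correctly.
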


\subsection{Graph Theory}
In this paper, we consider NE seeking for dynamic agents with communication over networks with fixed (static) topology. The communication protocol relies on graph theory.
The following is from \cite{gr01}. An undirected graph, $G$ is a pair $G = (\mathcal{I},E)$ where  $\mathcal{I} = \{1,\dots,N\}$  is the vertex set and $E \subset \mathcal{I}\times\mathcal{I}$ is the edge set. Since $G$ is an undirected graph, for all $i,j \in \mathcal{I}$, if $(i,j)\in E$ then $(j,i)\in E$. Let $\mathcal{N}_i \subset \mathcal{I}$  denote the set of neighbours of player $i$. The adjacency matrix $\textbf{A} = [a_{ij}] \in \reals^{N \times N}$ of the graph $G$ is defined such that $a_{ij}=1$ if $(j,i)\in E$ and $a_{ij}=0$ otherwise. For an an undirected graph, $a_{ij}=a_{ji}$. $G$ is connected if any two agents are connected by a path. The Laplacian matrix $L = [l_{ij}] \in \reals^{N \times N}$ of the graph $G$ is defined as $l_{ii} = \sum_{j\neq i} a_{ij}=|\mathcal{N}_i|$ and $l_{ij} = -a_{ij}$, for $i\neq j$. 
For an undirected and connected graph, $L$ is symmetric positive definite and has a simple zero eigenvalue such that $0<\lambda_2(L)\leq \ldots \leq \lambda_N(L)$ and $L\textbf{1}_N = 0$. Furthermore, for any vector $y \in \reals^N$ satisfying $\textbf{1}_N^Ty=0$, $\lambda_2(L)\|y\|^2\leq y^TLy \leq \lambda_N(L)\|y\|^2$.

\subsection{Game Theory}
Consider a set of players, $\mathcal{I} = \{1,\dots,N\}$. Each player $i\in \mathcal{I}$ controls its own action $x_i \in \Omega_i \subset \reals^{n_i}$. The overall action set of the players is $\Omega = \Omega_1 \times \dots \times \Omega_N \subset \reals^n$, where $n = \sum_{i\in \mathcal{I}} n_i$. Let $x = (x_i,x_{-i})\in \Omega$ denote the overall action profile of all players, where $\textcolor{blue}{x_{-i}} \in \Omega_{-i} = \Omega_1 \times \dots \times \Omega_{i-1} \times \Omega_{i+1} \times \dots \times \Omega_N\subset \reals^{n_{-i}}$ is the action set of all players except for player $i$. Let $J_i:\Omega \rightarrow \reals$ be the cost function of player $i$. Each player tries to minimize its own cost function over its action. Denote the game $\mathcal{G}(\mathcal{I},J_i,\Omega_i)$. \begin{defn} \label{def:NE}
Given a game $\mathcal{G}(\mathcal{I},J_i,\Omega_i)$, an action profile $x^* = (x_i^*,x_{-i}^*)\in \Omega$ is a Nash Equilibrium (NE) of $\mathcal{G}$ if
\begin{align*}
J_i(x_i^*,x_{-i}^*)\leq J_i(x_i,x_{-i}^*) \quad \forall i \in \mathcal{I},\ \forall x_i \in \Omega_i
\end{align*}
At a Nash Equilibrium no player can unilaterally decrease its cost, and thus has no incentive to switch strategies (actions) on its own.
\end{defn}
\begin{asmp} \label{asmp:Jsmooth}
		For each $ i \in \mathcal{I}$, let $\Omega_i=\reals^{n_i}$, the cost function $J_i:\Omega \rightarrow \reals$ be $\mathcal{C}^1$ in its arguments and convex in $x_i$.
\end{asmp}

Under Assumption \ref{asmp:Jsmooth}, 
any NE satisfies
\vspace{-0.26cm}\begin{align} \label{eq:NashInner}
 \nabla_{i} J_i(x^*_i,x^*_{-i})=0,\quad \forall i \in \mathcal{I}
\end{align}
where $\nabla_i J_i(x_i,x_{-i}) = \frac{\partial}{\partial x_i} J_i(x_i,x_{-i}) \in \reals^{n_i}$ is the partial gradient of player $i$'s cost function, with respect to its own action. We denote the set of all NE in the game by   \vspace{-0.25cm}
\begin{align}
\Gamma_{NE} = \big \{x \in  \reals^n| \nabla_i J_i(x_i,x_{-i}) = 0,\, \forall i \in \mathcal{I}&\big \}
\end{align} 
Let  $\!F(x)\!\! =\!\! \col(\!\nabla_1 J_1(x),\dots,\!\nabla_N J_N(x))$ denote the pseudo-gradient- the stacked vector of all partial gradients, so  (\ref{eq:NashInner}) is  \vspace{-0.5cm}
\begin{align*} 
 F(x^*) = 0
\end{align*}
\begin{asmp} \label{asmp:PseudoGrad}
		The pseudo-gradient $F:\Omega \rightarrow \reals^n$ is strongly monotone, $(x-x')^T(F(x)-F(x'))>\mu||x-x'||^2$, $\forall x,x' \in \reals^n$ for $\mu>0$ and Lipschitz continuous, $||F(x)-F(x')|| \leq \theta ||x-x'||$, $\theta >0$.
\end{asmp}
Under Assumptions \ref{asmp:Jsmooth} and \ref{asmp:PseudoGrad}, by Theorem 3 in \cite{sfpp14}, the game has a unique NE.
 \vspace{-0.25cm}
\subsection{Full-Information Gradient Dynamics} \label{sec:GradDyn}
%

In the rest of this paper, we assume that each agent updates its action in a continuous manner, therefore $x_i=x_i(t)$. For simplicity of notation, we drop the explicit dependence on time. In a game with perfect information, i.e., complete communication graph, 
a gradient-based NE seeking algorithm (gradient-play) can be used for action update, given by
\vspace{-0.26cm}\begin{align}\label{eq:GradDyn}
\Sigma_i: \quad \dot x_i = -\nabla_i J_i(x_i,x_{-i}),\quad \forall i \in \mathcal{I}
\end{align}
We call $\Sigma_i$  the \emph{agent learning dynamics}, and note that it requires full decision information of the others', $x_{-i}$.

 The game can be visualized as an interconnection between all agents' learning dynamics, $\Sigma_i$, $i \!\in \!\mathcal{I}$, represented as in Fig. \ref{fig:GameNoDyn}, where $\Sigma_{-i}$ denote the other agents' learning dynamics (except $i$), and  $s_{-i}$ is the information received by agent $i$ from the others $\Sigma_{-i}$ in continuous-time. Hence, in the full information setting, $s_{-i} =x_{-i}$.  
\begin{figure}[h!]
\vspace{-0.26cm}
  \centering
\begin{tikzpicture}[auto, node distance=1cm]
    \node [input, name=input] {};
    \node [block, right = of input, minimum width = 1 cm, minimum height = 0.75 cm] (player) {$ \Sigma_i $};

    \node [block, below right = -0.75 cm and 1 cm of player, minimum height = 2.5 cm, minimum width = 1 cm] (game) {$ \mathcal{G} $};
    
    \node [block, below = of player, minimum width = 1 cm, minimum height = 0.75 cm] (others) {$\Sigma_{-i}$};

\path (player.south) -- (player.south west) coordinate[pos=0.5] (a1);
\path (others.north) -- (others.north west) coordinate[pos=0.5] (b1);

\path (player.south) -- (player.south east) coordinate[pos=0.5] (a2);
\path (others.north) -- (others.north east) coordinate[pos=0.5] (b2);


    \draw [->] (a2) -- node [name=si] {$s_i$}(b2);
    \draw [->] (b1) -- node [name=so] {$s_{-i}$}(a1);

    \draw [->] (player) -- node [name=xi] {$x_i$}(player-|game.west);
    \draw [->] (others) -- node [name=xo] {$x_{-i}$}(others-|game.west);
\end{tikzpicture}
	\caption{Game as an interconnection between agents' dynamics} 
	\label{fig:GameNoDyn}
\end{figure}
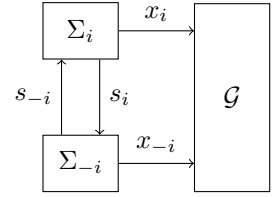
With \eqref{eq:GradDyn},  the  overall  dynamics of all players, $\Sigma = (\Sigma_i,\Sigma_{-i})$ is $\Sigma: \,\,\, \dot x = -F(x)$. 
Note that $\Sigma$ can be viewed as a feedback interconnection between a bank of integrators with the pseudo-gradient map $F$. 
Under Assumption \ref{asmp:Jsmooth}, the solutions of (\ref{eq:GradDyn}) exist and are unique for any initial condition, $x(0)$. Under Assumption \ref{asmp:PseudoGrad}, the unique Nash Equilibrium of the game is globally asymptotically stable for the  interconnected $\Sigma$,  with $\Sigma_i$ as in  \eqref{eq:GradDyn}, (cf. \cite{Flam02} or Lemma 1, \cite{gp18}).

\subsection{Partial-Information Gradient Dynamics} \label{sec:GradDynPartial}
Often only partial information is available to each agent, i.e., from the neighbours of each agent. In this case, a modified algorithm must be used, where agent $i$ uses estimates, $\textbf{x}^i$, which it shares with its neighbours, and evaluates its gradient using these estimates instead of the others' actions. Referring to Fig. \ref{fig:GameNoDyn}, in this case, $s_{-i} = \{\textbf{x}^j|j \in \mathcal{N}_i\}$. The following is from \cite{gp18}. Consider a game with information exchanged over a network, with static communication graph, $G_c$ and Laplacian, $L$.
\begin{asmp} \label{asmp:GraphConnected}
The undirected graph $G_c$ is connected.
\end{asmp}
Consider the following agent learning dynamics
\vspace{-0.26cm}\begin{align}\label{eq:GradDynPartialInfo}
\Sigma_i: \begin{cases}\dot{\textbf{x}}_{-i}^i = -\mathcal{S}_i \sum_{j \in \mathcal{N}_i} (\textbf x^i - \textbf x^j)\\
\dot x_i = -\nabla_i J_i(x_i,\textbf{x}_{-i}^i)  - \mathcal R_i \sum_{j \in \mathcal{N}_i} (\textbf x^i - \textbf x^j), \, \forall i \in \mathcal{I} \end{cases}\hspace{-0.26cm}
\end{align}
where $\textbf{x}_{-i}^i$ are agent $i$'s estimates of the others actions. 
Based on local communication with its neighbours, $\mathcal{N}_i$, each agent $i$ computes estimates of all other agents' actions, $\textbf{x}_{-i}^i\!\!=\!\!\col(\textbf{x}_{1}^i,\dots,\textbf{x}_{i-1}^i,\textbf{x}_{i+1}^i,\dots,\textbf{x}_{N}^i) \! \in \! \reals^{n_{-i}}$ and uses these estimates to evaluate its gradient, $\nabla_i J_i(x_i,\textbf{x}_{-i}^i)$. Then, $\textbf{x}^i\!=\!\col(\textbf{x}_{1}^i,\dots,\textbf{x}_{i-1}^i,x_i,\textbf{x}_{i+1}^i,\dots,\textbf{x}_{N}^i) \! \in \! \reals^n$ and $\textbf{x}\!=\!\col(\textbf{x}^1,\dots,\textbf{x}^N) \!\in\! \reals^{Nn}$. The matrices $\mathcal{R}_i$ and $\mathcal{S}_i$ are defined as follows 
\vspace{-0.26cm}\begin{align} \label{eq:RS}
\begin{split}
\mathcal R_i &:= \begin{bmatrix} 0_{n_i\times n_{<i}} & I_{n_i} & 0_{n_i\times n_{>i}} \end{bmatrix}\\
\mathcal{S}_i &:= \begin{bmatrix} I_{n_{<i}} & 0_{n_{<i} \times n_i} & 0_{n_{<i} \times n_{>i}}\\
                                       0_{n_{>i} \times n_{<i}} & 0_{n_{>i} \times n_i} &  I_{n_{>i}} \end{bmatrix}
\end{split}
\end{align}
for actions and estimates selection, where $n_{<i}:=\sum_{j<i\ j,i\in \mathcal{I}}n_j$ and $n_{>i}:=\sum_{j>i\ j,i\in \mathcal{I}}n_j$. Then $x_i = \mathcal{R}_i \textbf{x}^i$, $\textbf{x}_{-i}^i = \mathcal{S}_i \textbf{x}^i$, and $\textbf{x}^i = \mathcal{R}_i^T x_i+\mathcal{S}_i^T \textbf{x}^i_{-i}$. Note that, 
\vspace{-0.26cm}\begin{align}\label{eq:r_i}
\mathcal{R}_i^T\mathcal{R}_i+\mathcal{S}_i^T\mathcal{S}_i = I_{n}, \qquad \qquad \forall i \in \mathcal{I}
\end{align}


The vector of stacked, partial gradients $\nabla_i J_i(x_i,\textbf x_{-i}^i)$ in  \eqref{eq:GradDynPartialInfo}, computed based on estimates, is denoted as \vspace{-0.26cm}\begin{align} \label{eq:ExtPseudo}
\textbf F(\textbf{x}) = \col(\nabla_1 J_1(x_1,\textbf{x}_{-1}^1),\dots,\nabla_N J_N(x_N,\textbf{x}_{-N}^N)).
\end{align}
and is called the extended pseudo-gradient.  Note that $\textbf F$ satisfies  $\textbf{F}(\textbf{1}_N \otimes x)  = F(x) $ for any $x$, hence
\vspace{-0.26cm}\begin{align} \label{eq:ExtPseudoZero}
\textbf{F}(\textbf{1}_N \otimes x^*) = 0
\end{align}
\begin{asmp} \label{asmp:ExtendedPseudoGrad}
$\textbf F$ is Lipschitz continuous, $||\textbf{F}(\textbf{x})-\textbf{F}(\textbf{x}')|| \leq \theta ||\textbf{x}-\textbf{x}'||$, for all $\textbf{x},\textbf{x}' \in \reals^{Nn}$, for some $\theta >0$.
\end{asmp} 
Under Assumptions  \ref{asmp:Jsmooth}- \ref{asmp:ExtendedPseudoGrad},  if $\mu(\lambda_2(L)-\theta)>\theta^2$,  the unique NE, $x=x^*$, is globally asymptotically stable for 
all networked interconnected $\Sigma_i$, \eqref{eq:GradDynPartialInfo}, (cf. Theorem 1, \cite{gp18}). 

\vspace{-0.26cm}
\section{Problem Formulation} \label{sec:problem}
In this paper, we consider the problem of NE seeking for multi-integrator agents  in the presence of additive disturbance signals. The dynamics of each agent can be modelled by the following linear system, of order $r_i\geq1$
\vspace{-0.26cm}\begin{align} \label{eq:multiIntegrator}
x_i^{(r_i)} = u_i+d_i, \quad \forall i \in \mathcal{I}
\end{align}
where $x_i^{(r_i)}:=\frac{d^{r_i}x_i(t)}{dt^{r_i}}$. Each agent has a cost function $J_i(x_i,x_{-i})$ that it seeks to minimize. Agent $i$ is affected by disturbance, $d_i$, which can be modelled as being generated by 
\vspace{-0.26cm}\begin{align} \label{eq:Case1a_w}
\mathcal{D}_i:&\begin{cases}
\dot w_i = S_iw_i, \quad w_i(0) \in \mathcal{W}_i, \, \, \, \forall i \in \mathcal{I}\\
d_i = D_iw_i
\end{cases}
\end{align} 
where $w_i \in\reals^{q_i}$, $d_i  \in \reals^{n_i}$. We assume that $ \mathcal{W}_i$ a compact subset set of $\reals^{q_i}$ and  that $\mathcal{D}_i$ is marginally (neutrally) stable and observable, \cite{ib90}. Let $\mathcal{W}= \mathcal{W}_1 \times \dots \times \mathcal{W}_N$.
This setting is motivated by cases where the agents in the game have inherent dynamics. This occurs, for example, in the case of a game played between a network of velocity-actuated (single-integrator) or force-actuated (double-integrator) robots whose costs depend upon their position only. The disturbances are the result of a deterministic effect from the physical nature of the systems, e.g., wind pushing the mobile robots. These disturbances have known form (e.g., constant) but unknown parameters (e.g., strength). Therefore, 
we assume that each agent knows its $S_i$ and $D_i$ only, but has no knowledge of the initial condition $w_i(0)\in\mathcal{W}_i$ or the resulting solution $w_i(t)$. These are standard assumptions in the output regulation literature. Now the problem becomes one of finding control inputs, $u_i$,  that minimize the cost function $J_i(x_i,x_{-i})$ while  simultaneously rejecting the disturbances,  i.e., designing  dynamics $\Sigma_i$, under which the NE $x^*$ is asymptotically stable for the closed-loop irrespective of disturbances  (Fig. \ref{fig:Case1NoDyn}). 
We consider separately the single-integrator agents (Section \ref{sec:SingleIntegrator}) and double-integrator agents (Section \ref{sec:second_integrator}), and indicate how to extend the results  to multi-integrator agents. 


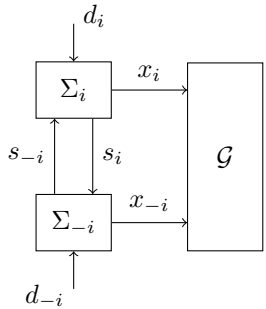
\begin{figure}[h!]
  \centering
\begin{tikzpicture}[auto, node distance=1cm]
    \node [input, name=input] {};
    \node [block, right = of input, minimum width = 1 cm, minimum height = 0.75 cm] (player) {$ \Sigma_i $};
    \node [input, name=dist, above= 0.5 cm of player] {};

    \node [block, below right = -0.75 cm and 1 cm of player, minimum height = 2.5 cm, minimum width = 1 cm] (game) {$ \mathcal{G} $};
    
    \node [block, below = of player, minimum width = 1 cm, minimum height = 0.75 cm] (others) {$\Sigma_{-i}$};
    \node [input, name=dist2, below= 0.5 cm of others] {};

\path (player.south) -- (player.south west) coordinate[pos=0.5] (a1);
\path (others.north) -- (others.north west) coordinate[pos=0.5] (b1);

\path (player.south) -- (player.south east) coordinate[pos=0.5] (a2);
\path (others.north) -- (others.north east) coordinate[pos=0.5] (b2);


  \draw [->] (dist) -- node [pos=-0.2] {$d_i$} (player);
 \draw [->] (dist2) -- node [pos=-0.2] {$d_{-i}$} (others);
    \draw [->] (a2) -- node [name=si] {$s_i$}(b2);
    \draw [->] (b1) -- node [name=so] {$s_{-i}$}(a1);

    \draw [->] (player) -- node [name=xi] {$x_i$}(player-|game.west);
    \draw [->] (others) -- node [name=xo] {$x_{-i}$}(others-|game.west);
\end{tikzpicture}
	\caption{Game with disturbances on the dynamics of each agent}
	\label{fig:Case1NoDyn}
\end{figure}

In each case we consider 
a partial-decision information setting, under local knowledge and communication over a graph $G_c$. We will show that if each player uses a gradient-play dynamics combined with an internal-model correction term that implements a reduced order observer for $w_i$, \cite{ai95}, (and a consensus-based dynamics), then every solution of the stacked dynamics of all agents stays bounded and will converge to the NE, $x^*$, irrespective of disturbances $w \in \mathcal{W}$.

\section{NE Seeking for Single-Integrator Agents} \label{sec:SingleIntegrator}
In this section, we consider a game $\mathcal{G}$ where each agent is modelled as 
\vspace{-0.26cm}\begin{align} \label{eq:Case1a}
\dot x_i = u_i+d_i, \quad \forall i \in \mathcal{I}
\end{align}
where  $d_i$ is generated by \eqref{eq:Case1a_w}, as in the example of a network of velocity actuated robots, and has a cost  $J_i(x_i,x_{-i})$, which it seeks to minimize while rejecting disturbances. 
We consider that each agent has partial (networked) information from his neighbours over graph, $G_c$.
Under Assumptions \ref{asmp:Jsmooth} and \ref{asmp:PseudoGrad}, the game has a unique NE. Inspired by \eqref{eq:GradDynPartialInfo}, our proposed $u_i$ is dynamic and is generated by 
\vspace{-0.2cm}\begin{align} \label{eq:AgentFullInfoInput}
\begin{split}
\dot{\textbf{x}}_{-i}^i &= -\mathcal{S}_i \sum_{j \in \mathcal{N}_i} (\textbf x^i - \textbf x^j)\\
\dot \xi_i &= S_i(K_ix_i+\xi_i) +K_i \nabla_i J_i(x_i,\textbf x_{-i}^i)\\
&\quad +K_i \mathcal R_i \sum_{j \in \mathcal{N}_i} (\textbf x^i - \textbf x^j)\\
u_i &= -\nabla_i J_i(x_i,\textbf{x}^i_{-i})- \mathcal R_i \sum_{j \in \mathcal{N}_i} (\textbf x^i - \textbf x^j)\\
&\quad-D_i(K_ix_i+\xi_i), \quad \quad \quad \quad \quad \forall i \in \mathcal{I}
\end{split}
\end{align}
where $K_i$ is chosen such that 
$\sigma(S_i-K_iD_i)\subset\mathbb{C}^-$. Note that \eqref{eq:AgentFullInfoInput} has a gradient-play term (evaluated at estimates) as well as a dynamic component $\dot{\xi}_i$ to reject disturbances,  
combined with a dynamic Laplacian-based estimate-consensus component $\dot{\textbf{x}}_{-i}^i $, which in steady-state should bring all $\textbf{x}^i$ to the consensus subspace, $\textbf{x}^i=\textbf{x}^j$. This leads to $\Sigma_i$ given by,
\vspace{-0.26cm}\begin{align} \label{eq:AgentPartialInfo}
\Sigma_i:\begin{cases}
\dot x_i &= -\nabla_i J_i(x_i,\textbf x_{-i}^i) - \mathcal R_i \sum_{j \in \mathcal{N}_i} (\textbf x^i - \textbf x^j)\\
&\quad -D_i(K_ix_i+\xi_i) + d_i \\ 
\dot{\textbf{x}}_{-i}^i &= -\mathcal{S}_i \sum_{j \in \mathcal{N}_i} (\textbf x^i - \textbf x^j)\\
\dot \xi_i &= S_i(K_ix_i+\xi_i) +K_i \nabla_i J_i(x_i,\textbf x_{-i}^i)\\
&\quad +K_i \mathcal R_i \sum_{j \in \mathcal{N}_i} (\textbf x^i - \textbf x^j),\quad \quad \forall i \in \mathcal{I}
\end{cases}
\end{align}
Compared to \eqref{eq:GradDynPartialInfo}, \eqref{eq:AgentPartialInfo} has an extra-component, $\xi_i$, that acts an internal model for the disturbance. The following result shows convergence to the NE irrespective of disturbances.  
\begin{thm} \label{thm:SingleIntPartial}
Consider a game $\mathcal{G}(\mathcal{I},J_i,\Omega_i)$ with partial information over a graph $G_c$ with Laplacian $L$ and 
 agent learning dynamics $\Sigma_i$, \eqref{eq:AgentPartialInfo}, where disturbance  $d_i$ is as in \eqref{eq:Case1a_w}. Under Assumptions  \ref{asmp:Jsmooth}, \ref{asmp:PseudoGrad}, \ref{asmp:GraphConnected} and \ref{asmp:ExtendedPseudoGrad},  if $\mu(\lambda_2(L)-\theta)>\theta^2$, 
then $\bar{\textbf x} = \textbf 1_N \otimes x^*$, where $x^*$ is the unique NE, is globally asymptotically stable for all networked interconnected $\Sigma_i$s, \eqref{eq:AgentPartialInfo}, for all $w \in \mathcal{W}$. Moreover, all players' estimates converge globally  to $\bar{\textbf x} = \textbf 1_N \otimes x^*$, for all $w \in \mathcal{W}$.

\end{thm}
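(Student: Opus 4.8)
The plan is to introduce a reduced-order observer error, exploit the resulting cascade structure, and thereby reduce the theorem to the disturbance-free partial-information result of \cite{gp18} driven by a vanishing input. First I would change coordinates: define the per-agent observer error $e_i := K_ix_i+\xi_i-w_i$ and stack $\textbf{e}:=\col(e_1,\dots,e_N)$. Differentiating along \eqref{eq:AgentPartialInfo} and \eqref{eq:Case1a_w}, the gradient terms $\pm K_i\nabla_iJ_i$ and the Laplacian terms $\pm K_i\mathcal{R}_i\sum_{j}(\textbf{x}^i-\textbf{x}^j)$ appearing in $K_i\dot x_i$ and $\dot\xi_i$ cancel exactly, and using $d_i=D_iw_i$ one is left with the autonomous, decoupled error dynamics $\dot e_i=(S_i-K_iD_i)e_i$. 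Since $K_i$ is chosen so that $\sigma(S_i-K_iD_i)\subset\mathbb{C}^-$, the origin $\textbf{e}=0$ is globally exponentially, hence globally uniformly asymptotically, stable, independent of $w\in\mathcal{W}$.

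Next I would rewrite the action/estimate dynamics in these coordinates. The cancellation term satisfies $-D_i(K_ix_i+\xi_i)+d_i=-D_ie_i$, so the disturbance is removed exactly and the only residual forcing is $-D_ie_i$. Stacking $\textbf{x}=\col(\textbf{x}^1,\dots,\textbf{x}^N)$ and using $\textbf{x}^i=\mathcal{R}_i^Tx_i+\mathcal{S}_i^T\textbf{x}_{-i}^i$ together with \eqref{eq:r_i}, the combined estimate dynamics become $\dot{\textbf{x}}=-\textbf{R}\textbf{F}(\textbf{x})-(L\otimes I_n)\textbf{x}-\textbf{R}\textbf{D}\textbf{e}$, with $\textbf{R}:=\blkdiag(\mathcal{R}_1^T,\dots,\mathcal{R}_N^T)$, $\textbf{D}:=\blkdiag(D_1,\dots,D_N)$, and $\textbf{F}$ the extended pseudo-gradient \eqref{eq:ExtPseudo}. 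Setting $\textbf{e}=0$ recovers exactly the partial-information gradient-play dynamics \eqref{eq:GradDynPartialInfo}, whose equilibrium is $\bar{\textbf{x}}=\textbf{1}_N\otimes x^*$ (since $(L\otimes I_n)\bar{\textbf{x}}=0$ and $\textbf{F}(\bar{\textbf{x}})=0$ by \eqref{eq:ExtPseudoZero}). The full closed loop is thus the cascade of the globally asymptotically stable error subsystem into this $\textbf{x}$-subsystem with $\textbf{e}$ as input.

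The core step is to upgrade the global asymptotic stability of \cite{gp18} to input-to-state stability of the $\textbf{x}$-subsystem with respect to $\textbf{e}$. I would take the candidate $V(\tilde{\textbf{x}})=\tfrac12\norm{\tilde{\textbf{x}}}^2$ with $\tilde{\textbf{x}}:=\textbf{x}-\bar{\textbf{x}}$ and adapt their estimate: decomposing $\tilde{\textbf{x}}$ into its consensus component and its disagreement component orthogonal to $\mathrm{span}(\textbf{1}_N)\otimes\reals^n$, the strong monotonicity of $F$ (which governs $\textbf{F}$ on the consensus subspace), the Lipschitz bound $\theta$ on $\textbf{F}$, and the spectral gap $\lambda_2(L)$ combine, precisely under $\mu(\lambda_2(L)-\theta)>\theta^2$, to give $-\tilde{\textbf{x}}^T\big(\textbf{R}\textbf{F}(\textbf{x})+(L\otimes I_n)\tilde{\textbf{x}}\big)\le -c\norm{\tilde{\textbf{x}}}^2$ for some $c>0$. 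Adding the cross term and applying Young's inequality yields $\dot V\le -\tfrac{c}{2}\norm{\tilde{\textbf{x}}}^2+\tfrac{1}{2c}\norm{\textbf{R}\textbf{D}}^2\norm{\textbf{e}}^2$, so $\dot V\le -\tfrac{c}{4}\norm{\tilde{\textbf{x}}}^2$ whenever $\norm{\tilde{\textbf{x}}}\ge\rho(\norm{\textbf{e}})$ for a suitable $\rho\in\mathcal{K}$; by Theorem \ref{thm:ISSLyapunov} the $\textbf{x}$-subsystem is ISS in $\textbf{e}$. Finally, invoking Lemma \ref{lemma:ISSCascade} with the globally asymptotically stable error subsystem establishes that $(\tilde{\textbf{x}},\textbf{e})=(0,0)$, i.e. $\bar{\textbf{x}}=\textbf{1}_N\otimes x^*$ together with $e_i\to0$, is globally asymptotically stable for every $w\in\mathcal{W}$, and in particular all players' estimates converge to $\bar{\textbf{x}}$.

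I expect the main obstacle to be the ISS-Lyapunov inequality of the third paragraph: reproducing the consensus/disagreement decomposition of \cite{gp18} and checking that the negative-definite margin $c>0$ survives exactly under $\mu(\lambda_2(L)-\theta)>\theta^2$, after which the cross-term bound and the cascade step are routine. A secondary point I would state carefully is that the stability assertion is naturally read in the reduced $(\tilde{\textbf{x}},\textbf{e})$ coordinates: the internal-model states $\xi_i$ do not settle to a constant but track $w_i-K_ix_i^*$, reconstructing the persistent disturbance, which is precisely the mechanism enabling asymptotic rejection while $\textbf{x}\to\bar{\textbf{x}}$.
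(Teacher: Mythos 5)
Your proposal is correct and follows essentially the same route as the paper's proof: your observer error $e_i=K_ix_i+\xi_i-w_i$ is exactly the paper's $\rho=w-(Kx+\xi)$ up to sign, and you reproduce the same cascade reduction via $\mathcal{R}^T\mathcal{R}+\mathcal{S}^T\mathcal{S}=I$, the same Lyapunov function $V(\tilde{\textbf{x}})=\tfrac12\|\tilde{\textbf{x}}\|^2$ with the consensus/disagreement splitting, the same ISS-Lyapunov step (your Young's-inequality absorption is equivalent to the paper's gain-margin parameter $a$ in the quadratic form, positive definite precisely when $\mu(\lambda_2(L)-\theta)>\theta^2$), and the same cascade lemma to conclude. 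Your closing observation that $\xi_i$ tracks $w_i-K_ix_i^*$ rather than settling is accurate and consistent with the paper's statement of asymptotic stability in the $(\tilde{\textbf{x}},\rho)$ coordinates.
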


\begin{proof} The idea of the proof is to express all agents' interconnected dynamics as a closed-loop dynamical system for which the NE is shown to be globally asymptotically stable irrespective of disturbances. To show stability we use  a suitable change of coordinates to put the system  in cascade form. Then we  exploit ISS properties induced by strong monotonicity of the pseudo-gradient and Lipschitz continuity of the extended pseudo-gradient.
	
In stacked form, using $\textbf{F}$,  \eqref{eq:ExtPseudo},   \eqref{eq:Case1a_w}, all interconnected $\Sigma_i$, \eqref{eq:AgentPartialInfo}, of all agents $i \in \mathcal{I}$, can be written as a closed-loop system \vspace{-0.26cm}
\begin{align} \label{eq:sigma_part_distb}
&\quad \dot w = Sw \notag\\
\Sigma:&\begin{cases}
\dot x = -\textbf{F}(\textbf{x}) - \mathcal R \textbf{L} \textbf{x}-D(Kx+\xi) + Dw\\
\mathcal{S} \dot{\textbf{x}} = -\mathcal{S} \textbf{L} \textbf{x}\\
\dot \xi = S(Kx+\xi) +K \textbf{F}(\textbf{x})+K \mathcal R \textbf{L} \textbf{x}
\end{cases}
\end{align}
with   $\mathcal R \!\!=\! \!\blkdiag\!(\mathcal R_1,\dots\mathcal R_N)$, $\!\mathcal S \!= \!\blkdiag\!(\mathcal S_1\dots\!\mathcal S_N)$, $\!\textbf{L} \!\!= \!L \otimes I_N $,  $\!\textbf{x}\!=\col(\textbf{x}^1,\dots\textbf{x}^N) $,  $\!\col(\textbf{x}_{-1}^1,\dots,\textbf{x}_{-N}^N\!)\!=\!\mathcal{S}\textbf{x}\!$ by $\textbf{x}_{-i}^i \!=\! \mathcal{S}_i \textbf{x}^i$. 

Consider the coordinate transformation $\xi \mapsto \rho \!:=\! w\!-\!(Kx \!+ \!\xi)$, so that  
$
\dot \rho 
\!=\!(S\!-\!KD)\rho
$. 
Note that from  $\textbf{x}^i = \mathcal{R}_i^T x_i+\mathcal{S}_i^T \textbf{x}^i_{-i}$ it follows that   $\textbf{x} = \mathcal R^Tx+\mathcal{S}^T\mathcal{S}\textbf{x}$. 
Using  $\mathcal{R}^T\mathcal{R}\!+\!\mathcal{S}^T\mathcal{S} \!=\! I_{Nn}$, from \eqref{eq:r_i}, and the previous relations, it follows that in the new coordinates, the stacked-form dynamics \eqref{eq:sigma_part_distb} are given as, 
\vspace{-0.26cm}\begin{align}
\dot w &= Sw \nonumber \\
\begin{split} \label{eq:StackedDynamicsDist}
\dot{\textbf{x}} &= -\mathcal{R}^T\textbf{F}(\textbf{x}) - \textbf{L} \textbf{x}+\mathcal{R}^TD\rho\\
\dot \rho &= (S-KD)\rho
\end{split}
\end{align}
We note that \eqref{eq:StackedDynamicsDist} is in cascade form from $\rho$ to $\textbf{x}$. By shifting the coordinates $\textbf{x} \mapsto \tilde{\textbf{x}} := \textbf x - \bar{\textbf{x}}$, where $\bar{\textbf{x}} = \textbf{1}_N \otimes x^*$, the dynamics of the $(\tilde {\textbf{x}},\rho)$ subsystem become
\vspace{-0.26cm}\begin{align} \label{eq:CascadeSystem2}
\begin{split}
\dot{\tilde{\textbf{x}}} &= -\mathcal R^T \textbf{F}(\tilde{\textbf{x}}+\bar{\textbf{x}})-\textbf{L}(\tilde{\textbf{x}}+\bar{\textbf{x}})+\mathcal R^TD\rho\\
\dot \rho &= (S-KD)\rho
\end{split}
\end{align}
Note that   \eqref{eq:CascadeSystem2} is again in cascade form, with the $\rho$-subsystem 
generating  the external input  for the $\tilde{\textbf{x}}$-subsystem. 
Consider 
$V(\tilde{\textbf{x}}) = \frac{1}{2} \|\tilde{\textbf{x}}\|^2$.   
Then, along solutions of  the $\tilde{\textbf{x}}$-subsystem  in \eqref{eq:CascadeSystem2}, using $\textbf{L}\!\bar{\textbf{x}}=  \textbf{0}_{Nn}$, it holds that \vspace{-0.2cm}
\begin{align}\label{bau_0}
\dot V &= -\tilde{\textbf{x}}^T \big ( \mathcal R^T[\textbf{F}(\tilde{\textbf{x}}+\bar{\textbf{x}}) + \textbf{L}\tilde{\textbf{x}} \! - 
\mathcal R^TD\rho \big )
\end{align}

Decompose $\reals^{Nn}$ as $\reals^{Nn}\!=\!C^n \!\oplus \!E^n$, where $C^n \!=\! \{\textbf{1}_N \otimes \!x\! |\ x \!\in\! \reals^n\}$ is the consensus subspace, and $E^n$ is its orthogonal complement.  Any $\textbf{x}\!\in \!\reals^{Nn}$ can be written as $\textbf{x \!}=\! \textbf{x}^\bot \!+\!\textbf{x}^{\|}$, 
where  $\textbf{x}^{\|} \!= \!P_C \textbf{x} \!\in \!C^n$, $\textbf{x}^\bot \! = \! P_E \textbf{x} \! \in \! E^n$, for 
$P_C \!=\! \frac{1}{N} \textbf{1}_N \! \otimes \!\textbf{1}_N^T \!\otimes \! I_n$, $ P_E  \!=\! I_{Nn} \!-\!\frac{1}{N}\textbf{1}_N \!\otimes \! \textbf{1}_N^T \!\otimes  \!I_n
$. Thus,  ${\textbf{x}}^{\|} \!=\! \textbf{1}_N \!\otimes \!x$, for some $x \! \in \! \reals^n$, and  
 $ \tilde{\textbf{x}} \!=\!\textbf x \! -\! \bar{\textbf{x}} \!=\!  \tilde{\textbf{x}}^\bot \!+\!\tilde{\textbf{x}}^{\|}$, 
where  $\tilde{\textbf{x}}^{\|}\! =\! \textbf{1}_N  \!\otimes \! (x\!-\!x^*)$, $\tilde{\textbf{x}}^\bot \!=\!{\textbf{x}}^\bot$.  Using $\textbf{F}(\bar{\textbf{x}}) \!=\!\textbf{0}_{n}$ by (\ref{eq:ExtPseudoZero}), from \eqref{bau_0} we get \vspace{-0.2cm}
\begin{align}\label{bau_00}
\begin{split}
\dot V &= -(\tilde{\textbf{x}}^\bot+\tilde{\textbf{x}}^{\|})^T \mathcal R^T[\textbf{F}(\tilde{\textbf{x}}+\bar{\textbf{x}}) - \textbf{F}(\bar{\textbf{x}})]\\
& -(\tilde{\textbf{x}}^\bot\!+\!\tilde{\textbf{x}}^{\|})^T\textbf{L}(\tilde{\textbf{x}}^\bot \!+\! \tilde{\textbf{x}}^{\|})\!+\!(\tilde{\textbf{x}}^\bot\!+\!\tilde{\textbf{x}}^{\|})^T\mathcal R^TD\rho
\end{split}
\end{align}
Note that $\textbf{L}\tilde{\textbf{x}}^{\|} \!= \!\textbf{0}_{Nn}$ and $\lambda_2(L)\|\tilde{\textbf{x}}^\bot\|^2 \!\leq \! \tilde{\textbf{x}}^\bot \textbf{L} \tilde{\textbf{x}}^\bot$,   $\forall \tilde{\textbf{x}}^\bot \! \in \! E^n$ by properties of the Laplacian under Assumption \ref{asmp:GraphConnected}. 
Adding and subtracting $\! \textbf{F}(\tilde{\textbf{x}}^{\|}\!+\!\bar{\textbf{x}}) $ in \eqref{bau_00}, with 
$\textbf{F}(\tilde{\textbf{x}}^{\|}\!+\!\bar{\textbf{x}}) \!=\! \textbf{F}(\textbf{1}_N \otimes x) \!=\! F(x)$, $\textbf{F}(\bar{\textbf{x}}) \!= \!\textbf{F}(\textbf{1}_N \otimes x^*) \!=\! F(x^*)$, and using $\mathcal R(\tilde{\textbf{x}}^{\|})\! =\!x-x^*$,  
 yields  \vspace{-0.25cm}
\begin{align*}
\begin{split}
\dot V &\leq -(\tilde{\textbf{x}}^\bot)^T\mathcal R^T[ \textbf{F}(\tilde{\textbf{x}}^\bot+\tilde{\textbf{x}}^{\|}+\bar{\textbf{x}}) - \textbf{F}(\tilde{\textbf{x}}^{\|}+\bar{\textbf{x}})]\\
&\quad-(\tilde{\textbf{x}}^\bot)^T\mathcal R^T[F(x) - F(x^*)]-\lambda_2(L)\|\tilde{\textbf{x}}^\bot\|^2\\
&\quad -(x-x^*)^T[ \textbf{F}(\tilde{\textbf{x}}^\bot+\tilde{\textbf{x}}^{\|}+\bar{\textbf{x}}) - \textbf{F}(\tilde{\textbf{x}}^{\|}+\bar{\textbf{x}})]\\
&\quad -(x-x^*)^T[F(x) - F(x^*)]+(\tilde{\textbf{x}}^\bot+\tilde{\textbf{x}}^{\|})^T\mathcal R^TD\rho
\end{split}
\end{align*}
Using $\|\textbf{F}(\!\tilde{\textbf{x}}^\bot\!+\!\tilde{\textbf{x}}^{\|}\!+\!\bar{\textbf{x}})\!-\!\textbf{F}(\!\tilde{\textbf{x}}^\|\!+\!\bar{\textbf {x}})\| \!\leq \!\theta \| \!\tilde{\textbf{x}}^\bot\|$ by Assumption \ref{asmp:ExtendedPseudoGrad}, $\|\mathcal R \!\tilde{\textbf{x}}^\bot\| \! \leq \! \|\mathcal R\| \|\tilde{\textbf{x}}^\bot\|$, $\|F(x)-F(x^*)\|\! \leq \!\bar \theta \|x-x^*\|\! \leq  \!\theta \|x-x^*\|$, $(x-x^*)^T[F(x)-F(x^*)] \! \geq \!\mu \|x-x^*\|^2$ by Assumption \ref{asmp:PseudoGrad}, yields
\begin{align*}
\begin{split}
\dot V &\leq \theta \|\tilde{\textbf{x}}^\bot \|^2 + \theta\|\tilde{\textbf{x}}^\bot\|\|x-x^*\|-\lambda_2(L)\|\tilde{\textbf{x}}^\bot\|^2\\
&\quad+\theta\|x-x^*\|\|\tilde{\textbf{x}}^\bot\|-\mu\|x-x^*\|^2+(\tilde{\textbf{x}}^\bot+\tilde{\textbf{x}}^\|)^T\mathcal R^TD\rho
\end{split}
\end{align*}
Using $\|x-x^*\| = \frac{1}{\sqrt{N}}\|\tilde{\textbf{x}}^\|\|$, we can write \vspace{-0.26cm}
\begin{align*}
\begin{split}
\dot V &\leq -\begin{bmatrix} \|\tilde{\textbf{x}}^\|\| & \|\tilde{\textbf{x}}^\bot\| \end{bmatrix} \begin{bmatrix} \frac{1}{N} \mu & -\frac{1}{\sqrt{N}} \theta \\ -\frac{1}{\sqrt{N}} \theta & \lambda_2(L)-\theta \end{bmatrix}\begin{bmatrix} \|\tilde{\textbf{x}}^\|\| \\ \|\tilde{\textbf{x}}^\bot\| \end{bmatrix}\\
&\quad +\|\tilde{\textbf{x}}^\bot+\tilde{\textbf{x}}^\|\|\|\mathcal R^TD\|\|\rho\|
\end{split}
\end{align*}
Then, given any $a>0$, for any $\| \tilde{\textbf{x}}^\bot+\tilde{\textbf{x}}^\| \| \geq \frac{\|\mathcal R^TD\|}{a}\|\rho\|$, \vspace{-0.26cm} 
\begin{align*}
\begin{split}
\dot V &\leq -\begin{bmatrix} \|\tilde{\textbf{x}}^\|\| & \|\tilde{\textbf{x}}^\bot\| \end{bmatrix} \begin{bmatrix} \frac{1}{N} \mu & -\frac{1}{\sqrt{N}} \theta \\ -\frac{1}{\sqrt{N}} \theta & \lambda_2(L)-\theta \end{bmatrix}\begin{bmatrix} \|\tilde{\textbf{x}}^\|\| \\ \|\tilde{\textbf{x}}^\bot\| \end{bmatrix}\\
&\quad +a\|\tilde{\textbf{x}}^\bot+\tilde{\textbf{x}}^\|\|^2
\end{split}
\end{align*}
Note that 
$\|\tilde{\textbf{x}}^\bot+\tilde{\textbf{x}}^{\|}\|^2=\|\tilde{\textbf{x}}^\bot\|^2+\|\tilde{\textbf{x}}^{\|}\|^2 =
\|\tilde{\textbf{x}}\|^2   
$, 
so that, for any $\| \tilde{\textbf{x}}^\bot+\tilde{\textbf{x}}^\| \| \geq \frac{\|\mathcal R^TD\|}{a}\|\rho\|$ we can write, \vspace{-0.2cm}
\begin{align}\label{bau_2}
\dot V \!\leq \!-\!\big [ \|\tilde{\textbf{x}}^\|\| \, \,  \|\tilde{\textbf{x}}^\bot\| \big ] \!\!\begin{bmatrix} \frac{1}{N} \mu - a &\! -\frac{1}{\sqrt{N}} \theta \\ -\frac{1}{\sqrt{N}} \theta &\! \lambda_2(L)-\theta - a \end{bmatrix}\!\!\begin{bmatrix} \|\tilde{\textbf{x}}^\|\| \\ \|\tilde{\textbf{x}}^\bot\| \end{bmatrix}
\end{align}
For the $\tilde{\textbf{x}}$-subsystem  in \eqref{eq:CascadeSystem2}  to be ISS, we need the matrix on the right-hand side to be positive definite. This holds for any $a\!>\!0$ such that $a\!<\!\frac{1}{N} \mu$, $a\!<\!\lambda_2(L) \!-\! \theta$, and $(\frac{1}{N} \mu \!-\!a)(\lambda_2(L) \!-\! \theta \!-\! a) \!-\! \frac{1}{N} \theta^2 \!>\! 0$. 
Since $\mu(\lambda_2(L)-\theta)>\theta^2$,  the intersection of the above inequalities is guaranteed to be nonempty and the matrix is positive definite for any such $a$. Then, for any such $a$,   $\dot V(\tilde{\textbf{x}}) \leq -W(\tilde{\textbf{x}}),\, \forall \|\tilde{\textbf{x}}\| \geq \frac{\|\mathcal R^TD\|}{a}\|\rho\|$, where $W(\tilde{\textbf{x}})$ is a positive definite function, hence the $\tilde{\textbf{x}}$-subsystem  in \eqref{eq:CascadeSystem2}   is ISS with respect to $\rho$ by Theorem \ref{thm:ISSLyapunov}. Since  $\!\dot \rho \!=\! (S\!-\!KD)\rho$ is asymptotically stable by (\ref{eq:AgentPartialInfo}), it follows that the origin of (\ref{eq:CascadeSystem2}) is asymptotically stable by Lemma \ref{lemma:ISSCascade}, hence $( \mathbf{1}_N \! \otimes  x^*,0)$   
is asymptotically stable for (\ref{eq:StackedDynamicsDist}), for any $w \in \mathcal{W}$. 
\end{proof}
\begin{remark}
Local results follow if Assumption \ref{asmp:ExtendedPseudoGrad} holds  only locally around $\mathbf{x}^*=\mathbf{1}_N \otimes x^*$. 
We note that the class of quadratic games satisfies Assumption \ref{asmp:ExtendedPseudoGrad} globally.
\end{remark}
\begin{remark}
In the special case of full-information, there is no need for estimates and the agent (closed-loop) learning dynamics $\Sigma_i$, \eqref{eq:AgentPartialInfo}, reduce to,
\vspace{-0.2cm}\begin{align} \label{eq:AgentFullInfo}
\Sigma_i:
\begin{cases}
\dot x_i = -\nabla_i J_i(x_i,x_{-i})-D_i(K_ix_i+\xi_i)+d_i, \\ 
\dot \xi_i = S_i(K_ix_i+\xi_i)+K_i\nabla_i J_i(x_i,x_{-i}), 
\end{cases}
\end{align}
The convergence result as in Theorem \ref{thm:SingleIntPartial} holds without the need for Assumptions \ref{asmp:GraphConnected} and \ref{asmp:ExtendedPseudoGrad}.
\end{remark}

\section{NE Seeking For Double Integrators} \label{sec:second_integrator}


In this section, we consider NE seeking for double-integrator agents with disturbances. Our motivation  is two-fold. 
Firstly, the agents in the game might have some sort of inherent dynamics, such as double integrator robots playing a game wherein the cost functions are functions of their positions. Each agent, therefore, cannot directly update its action, $x_i$, via choice of input $u_i$ and must take into account its inherent dynamics. Secondly, we may want to consider higher-order dynamics for learning, 
as done extensively in the optimization literature, e.g., the heavy-ball method. 



Consider that each agent is modelled as a double integrator
\begin{align} \label{eq:DoubleIntegratorDist}
\begin{cases}
\dot x_i = v_i\\
\dot v_i = u_i+d_i
\end{cases}
\end{align}
where 
$x_i, v_i, u_i,d_i\in \reals^{n_i}$, $d_i$ generated by \eqref{eq:Case1a_w}. Each agent minimizes its cost function  $J_i(x_i,x_{-i})$, 
with the constraint that its steady-state velocity is zero. This setting is motivated for example in the case of a network of mobile, force-actuated robots whose costs depend on their positions only. At steady state, necessarily, their velocities must be zero. This requirement can be seen as the result of a quadratic penalty term, $J_{v_i}(v_i)\!=\! \frac{1}{2}\|v_i\|^2$, on the velocity of each agent. Thus, the overall cost function for each agent is given by $\bar J(x_i,x_{-i},v_i) \!=\!  J(x_i,x_{-i}) \!+\! \frac{1}{2}\|v_i\|^2$ and the resulting NE is
\vspace{-0.15cm}   
\begin{align} \label{eq:doubleNESet}
\!\!\!\!\Gamma_{NE}\!= \big \{\!(x,\!v\!) \!\in \! \reals^n \!\!\times \! \!\reals^n | \!\nabla_i J_i(x_i,x_{-i}) \!=\! 0,v_i\!=\!0, \forall i \! \in\! \mathcal{I}\big \}
\end{align}
Under Assumptions \ref{asmp:Jsmooth} and \ref{asmp:PseudoGrad}, $x^*$ is unique. 
By \eqref{eq:doubleNESet},  $(x^*,v^*)$ is such that \vspace{-0.2cm}
\begin{align}
\label{eq:PsudoGradZeroDouble}
F(x^*) &= 0, \quad 
v^* = 0
\end{align}
We consider partial-information learning dynamics under which the NE of the game is reached in the presence of additive disturbances. 

We propose the following dynamic feedback \vspace{-0.2cm}
	\begin{align} \label{eq:doubleIntLearningDist}
	\begin{split}
	\dot{\boldsymbol{\gamma}}_{-i}^i \!\!\!&= -\mathcal{S}_i\sum_{j \in \mathcal{N}_i}(\boldsymbol{\gamma}^i - \boldsymbol{\gamma}^j)\\
	\!\dot \xi_i \!\!\!&= S_i(K_iv_i+\xi_i)+K_i \nabla_i J_i(x_i+b_i v_i,\boldsymbol{\gamma}_{-i}^i)\\
	&\quad+K_i \Big(\frac{1}{b_i}v_i + \mathcal{R}_i\sum_{j \in \mathcal{N}_i}(\boldsymbol{\gamma}^i - \boldsymbol{\gamma}^j)\Big)\\
	\!\!u_i \! \!&=\! -\! \nabla_iJ_i(x_i + b_iv_i,\boldsymbol{\gamma}_{-i})- \frac{1}{b_i}v_i\\
	&\quad \!-\! \mathcal{R}_i \!\sum_{j \in \mathcal{N}_i}\!(\boldsymbol{\gamma}^i \! -\!\! \boldsymbol{\gamma}^j)\!-\!D_i(K_iv_i\!+\!\xi_i)\!
	\end{split}
	\end{align}
	where $K_i$ is such that $\sigma(S_i\!-\!K_iD_i) \! \!\subset \!\mathbb{C}^-$ and $b_i\!>\!0$ and $\boldsymbol{\gamma}^i$ is agent $i$'s estimate variable.  Note that in $\nabla_iJ_i$ is evaluated at a predicted point, $x_i+b_iv_i$. We denote $\boldsymbol{\gamma}_j^i$ agent $i$'s prediction of $x_j+b_jv_j$.
	\eqref{eq:doubleIntLearningDist} uses a similar internal model and Laplacian-based consensus scheme \eqref{eq:AgentFullInfoInput}, as in Section \ref{sec:SingleIntegrator}. However, instead of each agent estimating the others' actions, $x_{-i}$, they estimate the predicted actions $\{x_j + b_jv_j|j\in \mathcal{I},j\neq i\}$ 
	used to  evaluate the gradient at the predicted point.  Each agent will then share these estimates as well as their own prediction with their neighbours. Therefore, each agent $i$ computes $\boldsymbol{\gamma}_{-i}^i=\col(\boldsymbol{\gamma}_{1}^i,\dots,\boldsymbol{\gamma}_{i-1}^i,\boldsymbol{\gamma}_{i+1}^i,\dots,\boldsymbol{\gamma}_{N}^i) \in \reals^{n_{-i}}$ and uses these estimates when evaluating its gradient, $\nabla_i J_i(x_i+b_iv_i,\boldsymbol{\gamma}_{-i}^i)$. Intuitively, each agent makes a prediction on the future state of the game, $\!x_i\!+\!b_iv_i$, based on the current actions and velocities, and evaluates its gradient with respect to $x_i$ at this point. We denote $\boldsymbol{\gamma}^i=\col(\boldsymbol{\gamma}_{1}^i,\dots,\boldsymbol{\gamma}_{i-1}^i,x_i+b_iv_i,\boldsymbol{\gamma}_{i+1}^i,\dots,\boldsymbol{\gamma}_{N}^i) \in \reals^n$ and $\boldsymbol{\gamma}=\col(\boldsymbol{\gamma}^1,\dots,\boldsymbol{\gamma}^N) \in \reals^{Nn}$. 
\begin{remark}\label{rem:rem1}
From an agent perspective, the intuition behind \eqref{eq:doubleIntLearningDist} 
is that  
each agent evaluates its partial-gradient at a predicted future point, $x_i+b_iv_i$, obtained as a first-order prediction from the current action and velocity of each agent, with  a negative feedback on its velocity. This can be viewed as resulting from the quadratic penalty term associated with the velocity of each agent. 
In addition, consider the disturbance-free case and recall that gradient-play is a method that works well for single-integrators, i.e., systems with unit relative degree. By creating a fictitious output $\boldsymbol{\gamma}^i_i\!:\!=\!x_i\!+\!b_iv_i$ we decrease the relative degree of each agent to $\{1,\dots,1\}$. This creates a hyperplane  $x+\mathcal{B}v -x^* = 0$, where $\mathcal{B}=\blkdiag(b_1I_{n_1},\dots,b_NI_{n_N})$, on which the pseudo-gradient map is zero. The pseudo-gradient feedback makes this hyperplane attractive for the double-integrator system.  The feedback stabilizes $v=0$ and renders this hyperplane invariant, thereby stabilizing $x = x^*$. 

\end{remark}
\begin{remark}\label{rem:rem2}
	We note that \eqref{eq:doubleIntLearningDist} 
	is similar to a passivity-based group coordination design, e.g., \cite{ma07}. Indeed, the inner-loop feedback $u_i = -\frac{1}{b_i}v_i$ renders the agent dynamics passive with $\boldsymbol{\gamma}^i_i = x_i+b_iv_i$ as output. However, we stress that the feedback $\nabla_iJ_i(x_i+b_iv_i,\boldsymbol{\gamma}_{-i})$ is not necessarily the proper gradient of any function, as required in \cite{ma07}. Therefore, individually,  each agent is not a passive system  when the feedback is added, due to coupling to the others' actions via the cost function. This precludes using a passivity approach as in \cite{ma07}. Rather, here we use a combined ISS approach to deal with both the disturbance and the higher-order stabilization. 
\end{remark}

The choice of feedback \eqref{eq:doubleIntLearningDist} yields 
learning (closed-loop) dynamics given by  \vspace{-0.27cm}
\begin{align} \label{eq:doubleIntDynDistPart}
\!\Sigma_i\!:\!\begin{cases}\!
\dot{\boldsymbol{\gamma}}_{-i}^i \!\!\!&= -\mathcal{S}_i\sum_{j \in \mathcal{N}_i}(\boldsymbol{\gamma}^i - \boldsymbol{\gamma}^j),\quad \quad \quad \quad \forall i \in \mathcal{I}\\
\!\dot x_i \!\!\!&=v_i\\
\!\dot v_i \!\!\!&=\!-\! \nabla_iJ_i(x_i + b_iv_i,\boldsymbol{\gamma}_{-i})- \frac{1}{b_i}v_i\\
&\quad \!-\! \mathcal{R}_i \!\sum_{j \in \mathcal{N}_i}\!(\boldsymbol{\gamma}^i \! -\!\! \boldsymbol{\gamma}^j)\!-\!D_i(K_iv_i\!+\!\xi_i)\!+\!d_i\\
\!\dot \xi_i \!\!\!&= S_i(K_iv_i+\xi_i)+K_i \nabla_i J_i(x_i+b_i v_i,\boldsymbol{\gamma}_{-i}^i)\\
&\quad+K_i \Big(\frac{1}{b_i}v_i + \mathcal{R}_i\sum_{j \in \mathcal{N}_i}(\boldsymbol{\gamma}^i - \boldsymbol{\gamma}^j)\Big)\\   
\end{cases}
\end{align}
\begin{thm}\label{thm:second_partInfo_w_Disturb}
	Consider a game $\mathcal{G}(\mathcal{I},J_i,\reals^{n_i})$ with partial information over a graph $G_c$ with Laplacian $L$ and 
	learning dynamics $\Sigma_i$, \eqref{eq:doubleIntDynDistPart}, where disturbance  $d_i$ is generated by \eqref{eq:Case1a_w}. Under Assumptions  \ref{asmp:Jsmooth}-\ref{asmp:ExtendedPseudoGrad},  
	 if $\mu(\lambda_2(L)\!-\!\theta)\!>\!\theta^2$ then $1_N \otimes x^*$, where $x^*$ is the unique NE, is globally asymptotically stable for all networked interconnected $\Sigma_i$, for all $w \in \mathcal{W}$. Moreover, each player's estimates converge globally  to the NE value, $\bar{\boldsymbol{\gamma}} = \textbf 1_N \otimes x^*$. 
\end{thm}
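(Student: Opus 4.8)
The plan is to mirror the proof of Theorem~\ref{thm:SingleIntPartial} after reducing the double-integrator closed loop to single-integrator-type estimate dynamics via the fictitious output of Remark~\ref{rem:rem1}. First I would write all $\Sigma_i$ in \eqref{eq:doubleIntDynDistPart} in stacked form using $\textbf{F}$, $\textbf{L}=L\otimes I_n$, $\mathcal R=\blkdiag(\mathcal R_i)$, $\mathcal S=\blkdiag(\mathcal S_i)$ and $\dot w=Sw$. Since the internal model now keys off the velocity, the correct observer coordinate is $\rho:=w-(Kv+\xi)$; differentiating and using the cancellations built into $\dot\xi_i$ together with $D_i(K_iv_i+\xi_i)-d_i=-D_i\rho_i$ yields $\dot\rho=(S-KD)\rho$, which is Hurwitz by the design $\sigma(S_i-K_iD_i)\subset\mathbb{C}^-$. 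As in Theorem~\ref{thm:SingleIntPartial}, this isolates $\rho$ as an exponentially decaying exogenous input and puts the system in cascade form.

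Next I would introduce the stacked fictitious output $y:=x+\mathcal B v$, whose own-blocks $y_i=x_i+b_iv_i$ are exactly the prediction points $\boldsymbol{\gamma}^i_i$ at which gradients are evaluated, so that $\textbf{F}(\boldsymbol{\gamma})$ with $[\textbf{F}(\boldsymbol{\gamma})]_i=\nabla_iJ_i(x_i+b_iv_i,\boldsymbol{\gamma}^i_{-i})$ is the relevant extended pseudo-gradient. A direct computation shows the velocity feedback $-\tfrac1{b_i}v_i$ cancels the free $v_i$ term, leaving the own-block dynamics $\dot{\boldsymbol{\gamma}}^i_i=-b_i\big([\textbf{F}(\boldsymbol{\gamma})]_i+\mathcal R_i[\textbf{L}\boldsymbol{\gamma}]^i-D_i\rho_i\big)$, while the estimate blocks obey $\dot{\boldsymbol{\gamma}}^i_{-i}=-\mathcal S_i[\textbf{L}\boldsymbol{\gamma}]^i$. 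The factor $b_i$ appears only on the own-blocks and so breaks the symmetry of Theorem~\ref{thm:SingleIntPartial}; the key step is to absorb it with the weighted Lyapunov function $V(\tilde{\boldsymbol{\gamma}})=\tfrac12\tilde{\boldsymbol{\gamma}}^TM\tilde{\boldsymbol{\gamma}}$, $M=\blkdiag(M_i)$, $M_i=\tfrac1{b_i}\mathcal R_i^T\mathcal R_i+\mathcal S_i^T\mathcal S_i\succ 0$, where $\tilde{\boldsymbol{\gamma}}=\boldsymbol{\gamma}-\textbf{1}_N\otimes x^*$. Using $\mathcal R_i\mathcal R_i^T=I$, $\mathcal S_i\mathcal S_i^T=I$, $\mathcal R_i\mathcal S_i^T=0$, the $\tfrac1{b_i}$ weighting exactly cancels the $b_i$ on the own-blocks, giving $\dot V=-\tilde{\boldsymbol{\gamma}}^T\mathcal R^T\textbf{F}(\boldsymbol{\gamma})-\tilde{\boldsymbol{\gamma}}^T\textbf{L}\tilde{\boldsymbol{\gamma}}+\tilde{\boldsymbol{\gamma}}^T\mathcal R^TD\rho$, identical in form to \eqref{bau_0}. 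The remainder of the estimate argument is then verbatim Theorem~\ref{thm:SingleIntPartial}: split $\reals^{Nn}=C^n\oplus E^n$, use $\textbf{F}(\textbf{1}_N\otimes x^*)=0$ from \eqref{eq:ExtPseudoZero}, strong monotonicity and Lipschitz continuity of $F$ and $\textbf{F}$, and $\lambda_2(L)\|\tilde{\boldsymbol{\gamma}}^\bot\|^2\leq(\tilde{\boldsymbol{\gamma}}^\bot)^T\textbf{L}\tilde{\boldsymbol{\gamma}}^\bot$, to obtain a quadratic-form bound that is negative definite exactly when $\mu(\lambda_2(L)-\theta)>\theta^2$. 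Hence the $\boldsymbol{\gamma}$-subsystem is ISS with respect to $\rho$ by Theorem~\ref{thm:ISSLyapunov}, and since $\rho\to0$, Lemma~\ref{lemma:ISSCascade} gives $\boldsymbol{\gamma}\to\textbf{1}_N\otimes x^*$; in particular $y=x+\mathcal Bv\to x^*$ and all estimates reach consensus.

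Finally I would extract $x\to x^*$ and $v\to0$ from $y\to x^*$, the genuinely new step relative to the single-integrator case, where this was automatic. By definition $v=\mathcal B^{-1}(y-x)$, so the position dynamics close as $\dot x=\mathcal B^{-1}(y-x)$, a Hurwitz system ($-\mathcal B^{-1}\prec 0$) driven by the signal $y(t)\to x^*$; this subsystem is ISS in $x-x^*$ with respect to $y-x^*$, so a third application of Lemma~\ref{lemma:ISSCascade} yields $x\to x^*$ and hence $v=\mathcal B^{-1}(y-x)\to0$. Chaining the three cascade stages $\rho\to\boldsymbol{\gamma}\to x$ establishes global asymptotic stability of $\textbf{1}_N\otimes x^*$ for all $w\in\mathcal W$.

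The step I expect to be the main obstacle is the reduction itself: recognizing that the right disturbance coordinate is $\rho=w-(Kv+\xi)$ and, above all, that the $\mathcal B$-weighted Lyapunov function is what restores the exact form \eqref{bau_0} despite the $b_i$-asymmetry between the own-prediction and estimate-consensus blocks. Once that reduction is in place the estimate analysis is inherited from Theorem~\ref{thm:SingleIntPartial}; the only additional care is organizing the final linear cascade so that $v$, which is not an independent state but is recovered algebraically as $\mathcal B^{-1}(y-x)$, is driven to zero through $y\to x^*$ rather than stabilized directly.
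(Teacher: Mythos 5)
Your proposal is correct and follows essentially the same route as the paper's proof: the same observer coordinate $\rho = w-(Kv+\xi)$ with $\dot\rho=(S-KD)\rho$, the same change of variables to the fictitious output $\mathcal{R}\boldsymbol{\gamma}=x+\mathcal{B}v$, and your weighted matrix $M=\blkdiag\big(\tfrac{1}{b_i}\mathcal{R}_i^T\mathcal{R}_i+\mathcal{S}_i^T\mathcal{S}_i\big)$ is exactly the paper's Lyapunov function $V(\tilde{\boldsymbol{\gamma}})=\tfrac12\tilde{\boldsymbol{\gamma}}^T\mathcal{R}^T\mathcal{B}^{-1}\mathcal{R}\tilde{\boldsymbol{\gamma}}+\tfrac12\tilde{\boldsymbol{\gamma}}^T\mathcal{S}^T\mathcal{S}\tilde{\boldsymbol{\gamma}}$ in \eqref{V_doubleInt_gamma}, which recovers \eqref{bau_0} so that the ISS argument of Theorem \ref{thm:SingleIntPartial} applies verbatim. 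The only (immaterial) difference is the final cascade stage: the paper keeps $v$ as the residual state, shows the $v$-subsystem \eqref{eq:doubleIntDynStackedNew2bPartialDist} is ISS with respect to $(\tilde{\boldsymbol{\gamma}},\rho)$ via $V_2(v)=\tfrac12\|v\|^2$, and recovers $x=\mathcal{R}\boldsymbol{\gamma}-\mathcal{B}v$ algebraically, whereas you keep $x$ via the Hurwitz system $\dot x=\mathcal{B}^{-1}(\mathcal{R}\boldsymbol{\gamma}-x)$ and recover $v$ algebraically --- both are equally valid applications of Lemma \ref{lemma:ISSCascade}.
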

\begin{proof}
	The idea of the proof is similar to that of Theorem \ref{thm:SingleIntPartial}. We use a change of coordinates to express the closed-loop dynamics in cascade form and use ISS arguments the show stability of the NE for the overall cascade system, irrespective of disturbance. The difference lies in the fact that \eqref{eq:doubleIntDynDistPart} has extra terms due to the higher-order dynamics $\dot v_i$ that must be incorporated into the cascade.
	
	The stacked dynamics of \eqref{eq:doubleIntDynDistPart} is given by \vspace{-0.3cm}
	\begin{align} \label{eq:doubleIntDynFullDistPart}
	\begin{split}
	&\quad\dot w \!=\! Sw\\
	\Sigma:&\begin{cases}
	\mathcal{S}\dot{\boldsymbol{\gamma}} \! = \!-\mathcal{S}\textbf{L}\boldsymbol{\gamma}\\
	\dot x \!=\!v\\
	\dot v \!= \!-\mathcal{B}^{-1}v\!-\!\textbf{F}(\boldsymbol{\gamma})\!-\!\mathcal{R}\textbf{L}\boldsymbol{\gamma}\!-\!D(Kv\!+\!\xi)\!+\!Dw\\
	\dot \xi \!= \!S(Kv+\xi)+K(\textbf{F}(\boldsymbol{\gamma})+\mathcal{B}^{-1}v+\mathcal{R}\textbf{L}\boldsymbol{\gamma})
	\end{cases}
	\end{split}
	\end{align}
	Note that $\mathcal{R} \boldsymbol{\gamma} = [\mathcal{R}_i \boldsymbol{\gamma}^i]_{i\in \mathcal{I}} =  
	 [x_i + b_i v_i]_{i \in \mathcal{I}} = x + \mathcal{B} v $. 
	Let the coordinate transformation $x \mapsto \mathcal{R}\boldsymbol{\gamma} \!: = \! x \!+\! \mathcal{B} v$. Then, 
	 \vspace{-0.25cm}
	\begin{align*}
	\mathcal{R}\dot{\boldsymbol{\gamma}} &= -\mathcal{B}\textbf{F}(\boldsymbol{\gamma})-\mathcal{B}\mathcal{R}\textbf{L}\boldsymbol{\gamma}-\mathcal{B}D(Kv+\xi-w).
	\end{align*}
Combining this with the second equation in \eqref{eq:doubleIntDynFullDistPart}, by using the properties of $\mathcal{R}$ and $\mathcal{S}$, $\mathcal{R}^T\mathcal{R}+ \mathcal{S}^T\mathcal{S} = I$, yields  that 	 \vspace{-0.25cm}
$$
	\!\!\dot{\boldsymbol{\gamma}} \!=\! -\mathcal{R}^T\mathcal{B}\textbf{F}(\!\boldsymbol{\gamma}\!)\!-\!(\mathcal{R}^T\mathcal{B}\mathcal{R}\!+\!\mathcal{S}^T\mathcal{S})\textbf{L}\boldsymbol{\gamma}
$$
	Let $\xi \mapsto \rho \! := \! w-(K v\!+\!\xi)$, so that 
	$\dot \rho =(S-KD)\rho$. 
	Consider also $\boldsymbol{\gamma} \mapsto \tilde{\boldsymbol{\gamma}} := \boldsymbol{\gamma}-\bar{\boldsymbol{\gamma}}$. Then, in the new coordinates, using $\textbf{L}\bar{\boldsymbol{\gamma}} = 0$, the dynamics of the $(\tilde{\boldsymbol{\gamma}},v,\rho)$ are given by \vspace{-0.2cm}
	\begin{align} 
	\label{eq:doubleIntDynStackedNew2bPartialDist}
	\dot v &= -\mathcal{B}^{-1} v - \textbf{F}(\tilde{\boldsymbol{\gamma}}+\bar{\boldsymbol{\gamma}}) - \mathcal{R}\textbf{L}\tilde{\boldsymbol{\gamma}}+D\rho\\
	\label{eq:doubleIntDynStackedNew1bPartialDist}
	\dot{\tilde{\boldsymbol{\gamma}}}\! &=\! -\mathcal{R}^T\mathcal{B}\textbf{F}(\tilde{\boldsymbol{\gamma}}\!+\!\bar{\boldsymbol{\gamma}})\!-\!(\mathcal{R}^T\mathcal{B}\mathcal{R}\!+\!\mathcal{S}^T\mathcal{S})\textbf{L}\tilde{\boldsymbol{\gamma}}\!+\!\mathcal{R}^T\mathcal{B}D\rho\\
	\label{eq:doubleIntDynStackedNew3bPartialDist}
	\dot \rho &= (S-KD)\rho
	\end{align}
Note that the (\ref{eq:doubleIntDynStackedNew2bPartialDist}-\ref{eq:doubleIntDynStackedNew3bPartialDist}) is in cascade form with subsystem $(\tilde{\boldsymbol{\gamma}}, \rho)$ (\ref{eq:doubleIntDynStackedNew1bPartialDist}, \ref{eq:doubleIntDynStackedNew3bPartialDist}) generating the external input for \eqref{eq:doubleIntDynStackedNew2bPartialDist}. In turn, (\ref{eq:doubleIntDynStackedNew1bPartialDist}, \ref{eq:doubleIntDynStackedNew3bPartialDist}) is in cascade form with (\ref{eq:doubleIntDynStackedNew3bPartialDist}) generating input $\rho$ for (\ref{eq:doubleIntDynStackedNew1bPartialDist}). We show first that (\ref{eq:doubleIntDynStackedNew1bPartialDist}) is ISS with respect to $\rho$. 
	Consider  \vspace{-0.25cm}
	\begin{align}\label{V_doubleInt_gamma}
	V(\tilde{\boldsymbol{\gamma}}) = \frac{1}{2} \tilde{\boldsymbol{\gamma}}^T \mathcal{R}^T\mathcal{B}^{-1}\mathcal{R}\tilde{\boldsymbol{\gamma}}+\frac{1}{2}\tilde{\boldsymbol{\gamma}}^T \mathcal{S}^T \mathcal{S}\tilde{\boldsymbol{\gamma}}
	\end{align}
which is positive definite. 	
	Taking the time-derivative of $V$ \eqref{V_doubleInt_gamma} along the solutions of \eqref{eq:doubleIntDynStackedNew1bPartialDist}, using $\textbf{F}(\bar{\boldsymbol{\gamma}})=0$, cf. (\ref{eq:ExtPseudoZero}), and properties of $\mathcal{R}$, $\mathcal{S}$, e.g. $\mathcal{R}\mathcal{S}^T=0$, $\mathcal{R}\mathcal{R}^T=I$, yields \vspace{-0.25cm}
	\begin{align*}
	\begin{split}
	\dot V \!&=\! -\tilde{\boldsymbol{\gamma}}^T\mathcal{R}^T\mathcal{B}^{-1}(\mathcal{B}\textbf{F}(\!\tilde{\boldsymbol{\gamma}}\!+\!\bar{\boldsymbol{\gamma}})\!+\!\mathcal{B}\mathcal{R}\textbf{L}\tilde{\boldsymbol{\gamma}}\!-\!\mathcal{B}D\rho)-\tilde{\boldsymbol{\gamma}}^T\mathcal{S}^T\mathcal{S}\textbf{L}\tilde{\boldsymbol{\gamma}}\\
	\!&=-\tilde{\boldsymbol{\gamma}}^T(\mathcal{R}^T\textbf{F}(\tilde{\boldsymbol{\gamma}}+\bar{\boldsymbol{\gamma}}) + \textbf{L}\tilde{\boldsymbol{\gamma}}-\mathcal{R}^TD\rho)
	\end{split}
	\end{align*}
which is similar to \eqref{bau_0}. 
Then, following an argument  as in the proof of Theorem \ref{thm:SingleIntPartial}, it follows that 
 the $\tilde{\boldsymbol{\gamma}}$ subsystem of \eqref{eq:doubleIntDynStackedNew1bPartialDist} is ISS with input $\rho$. Since the origin of the $\rho$ subsystem is globally asymptotically stable, then the origin of the $(\tilde{\boldsymbol{\gamma}},\rho)$ subsystem is globally asymptotically stable (cf.  Lemma \ref{lemma:ISSCascade}).
	
	Now consider the $v$-subsystem \eqref{eq:doubleIntDynStackedNew2bPartialDist} with input $(\tilde{\boldsymbol{\gamma}},\rho)$ and 
$	V_2(v) = \frac{1}{2}\|v\|^2
$. 
	Along \eqref{eq:doubleIntDynStackedNew2bPartialDist},  using  Assumption \ref{asmp:PseudoGrad},  \vspace{-0.2cm}%
	\begin{align*}
	\dot V_2 &= -v^T\mathcal{B}^{-1}(v)-v^T(\textbf{F}(\tilde{\boldsymbol{\gamma}}+\bar{\boldsymbol{\gamma}})+\mathcal{R}\textbf{L}\tilde{\boldsymbol{\gamma}}-D\rho)\\
	&\!\leq \!-\frac{1}{b_m}\|v\|^2\!+\!\|v\|\|\textbf{F}(\!\tilde{\boldsymbol{\gamma}}\!+\!\bar{\boldsymbol{\gamma}})\|
	+\|v\|\|\mathcal{R}\textbf{L}\tilde{\boldsymbol{\gamma}}\|+\|v\|\|D\|\|\rho\| \\
	&\leq -\frac{1}{b_m}\|v\|^2  + \beta\|v\|(\|\rho\|+\|\tilde{\boldsymbol{\gamma}}\|)
	\end{align*}
where  $b_m \!=\! \max_{i\in \mathcal{I}} b_i$, $\beta \! = \! \max\{\|\mathcal{R}\textbf{L}\|\!+\!\theta,\|D\|\}$.  
		Thus 
$
		\dot V_2 \leq  -\frac{1}{b_m}\|v\|^2  + \beta\|v\| \sqrt{2(\|\rho\|^2+\| \tilde{\boldsymbol{\gamma}}\|^2)}
$ 
	or,  \vspace{-0.26cm}
	\begin{align*}
	\dot V_2 \leq -\frac{1}{b_m}\|v\|^2 + \sqrt{2}\beta\|v\|\|\tilde{u}\|.
	\end{align*}
where $\tilde{u}:=\col(\tilde{\boldsymbol{\gamma}},\rho)$. Hence, $ 
	\dot V_2  \! 	\leq  \!-\big(\frac{1}{b_m} \!-\! b \big)\|v\|^2,\ \forall \|v\| \!\geq \!\frac{\sqrt{2}\beta}{b} \!\|\tilde{u}\|  
	$, 
	for  any $0\!<\!b\!<\!\frac{1}{b_m}$.
	Therefore, by Theorem \ref{thm:ISSLyapunov}, \eqref{eq:doubleIntDynStackedNew2bPartialDist} is ISS with $\tilde{u} = (\tilde{\boldsymbol{\gamma}},\rho)$. Since the origin of $(\tilde{\boldsymbol{\gamma}},\rho)$ subsystem is globally asymptotically stable, by Lemma \ref{lemma:ISSCascade}, the origin of \eqref{eq:doubleIntDynStackedNew1bPartialDist}-\eqref{eq:doubleIntDynStackedNew3bPartialDist} is globally asymptotically stable, hence  $(x^*,0)$ is globally asymptotically stable for \eqref{eq:doubleIntDynFullDistPart} for all $w \in \mathcal{W}$.
\end{proof}

\begin{remark}
	In the full information case, there is no need for estimate and \eqref{eq:doubleIntDynDistPart} reduces to \vspace{-0.22cm} 
	\begin{align}\label{eq:doubleIntDynDist}
	\!\Sigma_i\!\!:\!\! \begin{cases}
	\!\!\dot x_i \!\! \!\!\!&= \!v_i\\
	\!\!\dot v_i \!\!\!\!\!&= \!-\!\nabla_iJ_i(x_i\!\!+\!\!b_iv_i,\!x_{-i}\!\!+\!\!b_{-i}v_{-i})
	\!-\! \!\frac{1}{b_i}v_i \!-\!\!D_i(K_iv_i\!+\!\!\xi_i)\!+\!\!d_i\\
	\!\!\dot \xi_i \!\!\!\!\!&= \!S_i(K_iv_i\!+\!\xi)
	\!+\! K_i\big(\nabla_iJ_i(x_i\!+\!b_iv_i,\!x_{-i}\!+\!\!b_{-i}v_{-i})\!+\! \frac{1}{b_i}v_i\!\big)
	\end{cases}
	\end{align}
	The convergence results hold without the need for Assumptions \ref{asmp:GraphConnected} and \ref{asmp:ExtendedPseudoGrad}.
		Furthermore, the disturbance-free, 
		higher-order learning dynamics generated by \eqref{eq:doubleIntDynDist} 
		is  \vspace{-0.23cm}
		$$\ddot x + \mathcal{B}^{-1} \dot x + F(x + \mathcal{B} \dot x) =0$$
		which resembles heavy-ball with friction dynamics used in optimization, \cite{Attouch2000,ALvarez2000}. 
\end{remark}

\begin{remark}
The results  from this section 
can easily be extended to multi-integrator agents. 
Consider that  each agent is modelled as a $\!r_i^{th}\!$ order integrator, $r_i \! \geq \!2$, 
\vspace{-0.26cm}\begin{align*} 
\dot x_i &= C_i v_i\\
\dot v_i &= A_iv_i+B_i(u_i+d_i), \quad \forall i \in \mathcal{I}
\end{align*}
where 
$
A_i \!=\! \begin{bmatrix} \!0_{n_i(r_i-2)\!\times \!n_i} \!&\! I_{n_i(r_i-2)} \\ \!0_{n_i\!\times \! n_i}  \!&\! \!0_{n_i\! \times \! n_i(r_i-2)} \end{bmatrix}$, $ B_i \!=\! \begin{bmatrix} \!0_{n_i(r_i-2)\!\times \!n_i} \\ \! I_{n_i} \! \end{bmatrix}$, 
$C_i \!=\! \begin{bmatrix} \! I_{n_i} \! \!& \!0_{n_i \! \times \! n_i(r_i-2)} \!\end{bmatrix}$, 
$v_i = \col(v_i^1,\dots,v_i^{r_i-1})$,
 and has a cost function $J_i(x_i, x_{-i})$. In this case, 
$ \gamma_i \!:= \!x_i  \!+\! \begin{bmatrix} c_i^T \!\otimes \!I_{n_i} \!&\! I_{n_i} \end{bmatrix}  \!v_i$ where $c_i^T \!=\! \begin{bmatrix} c_{i,1} \!&\! \dots \!  & \!c_{i,(r_i-2)} \end{bmatrix}$, $c_{i,k}$ are  the coefficients of any $\!(r_i\!-\!1)^{\!th}\!$ order Hurwitz polynomial with $c_{i,0} \!\!=\! 1$, $c_{i, (r_i-1)} \!\!=\!1$, and 
$u_i \!:=\! -\nabla_i J_i(\gamma_i,\boldsymbol{\gamma}^i_{-i}) \!-\!  \begin{bmatrix} I_{n_i} \!& \! c_i^T \!\otimes \!I_{n_i}  \end{bmatrix} \! v_i$. 
When $r_i \!=\!2$, this feedback reduces to the one for the second-order integrator with $b_i=1$. Then a  dynamic learning scheme similar to \eqref{eq:doubleIntDynDistPart} can be developed, by appropriately augmenting with reduced-order observer for the disturbance, and consensus-dynamics for the estimates $\boldsymbol{\gamma}^i_{-i}$. 

The resulting agent learning dynamics  are given as
	\begin{align} \label{eq:multiIntDyn}
	\Sigma_i: \begin{cases}
	\dot{\boldsymbol{\gamma}}_{-i}^i &= -\mathcal{S}_i\sum_{j \in \mathcal{N}_i}(\boldsymbol{\gamma}^i - \boldsymbol{\gamma}^j)\\
	\dot x_i &= C_iv_i\\
	\dot v_i &= A_iv_i - B_i\Big(\nabla_iJ_i(\boldsymbol{\gamma}_i,\boldsymbol{\gamma}_{-i})+\begin{bmatrix} I_{n_i} \!& \! c_i^T \!\otimes \!I_{n_i}  \end{bmatrix} \! v_i\\
	&\quad+\mathcal{R}_i\sum_{j \in \mathcal{N}_i}(\boldsymbol{\gamma}^i - \boldsymbol{\gamma}^j)-D_iw\\
	&\quad+D_i(K_iv_i^{r_i-1}+\xi_i)\Big)\\
	\dot \xi_i &= S_i(K_iv_i^{r_i-1}+\xi_i)+K_i\Big(\nabla_i J_i(\boldsymbol{\gamma}_i^i,\boldsymbol{\gamma}_{-i}^i)\\
	&\quad+\mathcal{R}_i\sum_{j \in \mathcal{N}_i}(\boldsymbol{\gamma}^i - \boldsymbol{\gamma}^j)+\begin{bmatrix} I_{n_i} \!& \! c_i^T \!\otimes \!I_{n_i}  \end{bmatrix} \! v_i\Big)
	\end{cases}
	\end{align}
	which for $r_i=2$ reduces to \eqref{eq:doubleIntDynDistPart}.
	\begin{thm} \label{thm:MultiPartialDist}
		Consider a game $\mathcal{G}(\mathcal{I},J_i,\Omega_i)$ with partial information communicated over a graph $G_c$ with Laplacian $L$ and  agent dynamics given by $\Sigma_i$, \eqref{eq:multiIntDyn}. Under Assumptions  \ref{asmp:Jsmooth}, \ref{asmp:PseudoGrad}, \ref{asmp:GraphConnected} and \ref{asmp:ExtendedPseudoGrad},  if $\mu(\lambda_2(L)-\theta)>\theta^2$ then the unique NE, $x=x^*$, is globally asymptotically stable for \eqref{eq:multiIntDyn} for all $w \in \mathcal{W}$. Moreover, each player's estimates converge globally  to the NE values, $\bar{\textbf x} = \textbf 1_N \otimes x^*$, for all $w \in \mathcal{W}$.
\end{thm}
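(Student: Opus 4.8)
The plan is to replicate the cascade-plus-ISS structure of the proof of Theorem~\ref{thm:second_partInfo_w_Disturb}, with the higher-order internal (velocity) dynamics taking the role that the single velocity $v$ played there. First I would assemble the stacked closed-loop form of \eqref{eq:multiIntDyn} in terms of $\textbf{F}$, $\textbf{L}=L\otimes I_n$ and the block selection matrices $\mathcal{R}=\blkdiag(\mathcal{R}_1,\dots,\mathcal{R}_N)$, $\mathcal{S}=\blkdiag(\mathcal{S}_1,\dots,\mathcal{S}_N)$, together with $\dot w=Sw$. Since $B_i$ selects only the highest-derivative block, the disturbance enters $\dot v_i^{r_i-1}=u_i+d_i$ exactly as in the single-integrator case, so I would introduce the reduced-order observer coordinate $\rho:=w-(Kv^{r-1}+\xi)$, built on the stacked highest-order blocks $v_i^{r_i-1}$ with $K=\blkdiag(K_1,\dots,K_N)$. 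A direct substitution of the $\dot v_i^{r_i-1}$ and $\dot\xi_i$ equations then gives $\dot\rho=(S-KD)\rho$, which is globally exponentially stable because each $K_i$ places $\sigma(S_i-K_iD_i)\subset\mathbb{C}^-$; this decoupled $\rho$-subsystem sits at the bottom of the cascade.

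The key algebraic step is to pass to the stacked fictitious output $\boldsymbol{\gamma}$ with own-blocks $\gamma_i=x_i+[c_i^T\otimes I_{n_i},\,I_{n_i}]v_i$. Differentiating and exploiting the chain-of-integrators structure of $(A_i,B_i,C_i)$, one verifies the identity $C_i+[c_i^T\otimes I_{n_i},\,I_{n_i}]A_i=[I_{n_i},\,c_i^T\otimes I_{n_i}]$, which is exactly the velocity-feedback gain appearing in $u_i$, while $[c_i^T\otimes I_{n_i},\,I_{n_i}]B_i=I_{n_i}$ forces relative degree one. Consequently the $v$-dependent terms in $\dot{\boldsymbol{\gamma}}$ cancel against that feedback and $\dot\gamma_i$ collapses to $-\nabla_iJ_i(\gamma_i,\boldsymbol{\gamma}_{-i}^i)-\mathcal{R}_i\sum_{j\in\mathcal{N}_i}(\boldsymbol{\gamma}^i-\boldsymbol{\gamma}^j)+D_i\rho_i$. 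Using $\mathcal{R}^T\mathcal{R}+\mathcal{S}^T\mathcal{S}=I$ and the consensus equation $\mathcal{S}\dot{\boldsymbol{\gamma}}=-\mathcal{S}\textbf{L}\boldsymbol{\gamma}$, this yields in stacked form $\dot{\boldsymbol{\gamma}}=-\mathcal{R}^T\textbf{F}(\boldsymbol{\gamma})-\textbf{L}\boldsymbol{\gamma}+\mathcal{R}^TD\rho$, which is identical to the single-integrator dynamics \eqref{eq:StackedDynamicsDist}. Shifting $\tilde{\boldsymbol{\gamma}}:=\boldsymbol{\gamma}-\bar{\boldsymbol{\gamma}}$ with $\bar{\boldsymbol{\gamma}}=\textbf{1}_N\otimes x^*$ reproduces \eqref{eq:CascadeSystem2}, so the Lyapunov function $V(\tilde{\boldsymbol{\gamma}})=\frac{1}{2}\|\tilde{\boldsymbol{\gamma}}\|^2$ and the consensus/orthogonal-complement decomposition of Theorem~\ref{thm:SingleIntPartial} apply verbatim: under $\mu(\lambda_2(L)-\theta)>\theta^2$ the matrix in \eqref{bau_2} is positive definite for a suitable $a>0$, establishing ISS of $\tilde{\boldsymbol{\gamma}}$ with respect to $\rho$ and hence, by Lemma~\ref{lemma:ISSCascade}, global asymptotic stability of the $(\tilde{\boldsymbol{\gamma}},\rho)$ cascade.

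It remains to handle the $v$-subsystem with input $(\tilde{\boldsymbol{\gamma}},\rho)$, and this is where the genuinely new work lies. Substituting $u_i$ into $\dot v_i=A_iv_i+B_i(u_i+d_i)$ gives $\dot v_i=(A_i-B_i[I_{n_i},\,c_i^T\otimes I_{n_i}])v_i+B_i\eta_i$, where $\eta_i$ collects the terms in $\tilde{\boldsymbol{\gamma}}$ and $\rho$. The matrix $A_i-B_i[I_{n_i},\,c_i^T\otimes I_{n_i}]$ is precisely the companion matrix of the chosen $(r_i-1)$-th order Hurwitz polynomial and is therefore Hurwitz; for the double integrator it was the manifestly negative-definite $-b_i^{-1}I$ and the bound was immediate, whereas for $r_i\geq3$ I would instead solve, for each $i$, the Lyapunov equation $(A_i-B_i[I_{n_i},\,c_i^T\otimes I_{n_i}])^TP_i+P_i(A_i-B_i[I_{n_i},\,c_i^T\otimes I_{n_i}])=-Q_i$ with $P_i,Q_i$ positive definite, and take $V_2(v)=v^T\textbf{P}v$ with $\textbf{P}=\blkdiag(P_1,\dots,P_N)$. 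Using Lipschitz continuity of $\textbf{F}$ (Assumption~\ref{asmp:ExtendedPseudoGrad}) and $\textbf{F}(\bar{\boldsymbol{\gamma}})=0$ to bound $\eta$ linearly by $\|\tilde{\boldsymbol{\gamma}}\|+\|\rho\|$, its derivative satisfies $\dot V_2\leq-v^T\textbf{Q}v+c\,\|v\|(\|\tilde{\boldsymbol{\gamma}}\|+\|\rho\|)$ for some $c>0$, so $\dot V_2<0$ whenever $\|v\|$ is large relative to $\|(\tilde{\boldsymbol{\gamma}},\rho)\|$; by Theorem~\ref{thm:ISSLyapunov} the $v$-subsystem is ISS with respect to $(\tilde{\boldsymbol{\gamma}},\rho)$. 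A final application of Lemma~\ref{lemma:ISSCascade} makes the origin of the full $(v,\tilde{\boldsymbol{\gamma}},\rho)$ cascade globally asymptotically stable for every $w\in\mathcal{W}$; since $v\to0$ and $\boldsymbol{\gamma}\to\bar{\boldsymbol{\gamma}}$ force $x\to x^*$ through $\gamma_i=x_i+[c_i^T\otimes I_{n_i},\,I_{n_i}]v_i$, the claim follows. The main obstacle is exactly this $v$-subsystem argument: verifying the canonical-form cancellation identity so that the $\boldsymbol{\gamma}$-dynamics reduce cleanly, and constructing the block Lyapunov function $\textbf{P}$ for the higher-order internal dynamics, which in the double-integrator case was avoided entirely.
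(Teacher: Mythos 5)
Your proposal is correct and takes exactly the route the paper intends: the paper's own proof of Theorem~\ref{thm:MultiPartialDist} is the single line ``Similar to Theorem~\ref{thm:second_partInfo_w_Disturb},'' i.e., precisely the cascade-plus-ISS argument you lay out, with $\rho:=w-(Kv^{r-1}+\xi)$ giving $\dot\rho=(S-KD)\rho$ and the $\boldsymbol{\gamma}$-dynamics collapsing to the single-integrator form of \eqref{eq:StackedDynamicsDist} (here with $b_i=1$, since $c_{i,(r_i-1)}=1$). The two details the paper leaves implicit --- the cancellation identity $C_i+[c_i^T\otimes I_{n_i},\,I_{n_i}]A_i=[I_{n_i},\,c_i^T\otimes I_{n_i}]$ with $[c_i^T\otimes I_{n_i},\,I_{n_i}]B_i=I_{n_i}$, and the block Lyapunov equation for the Hurwitz companion matrices $A_i-B_i[I_{n_i},\,c_i^T\otimes I_{n_i}]$ replacing the trivial $-b_i^{-1}\|v\|^2$ bound of the double-integrator case --- are exactly what you supply, and both check out.
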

\begin{proof} Similar to Theorem \ref{thm:second_partInfo_w_Disturb}. 
\end{proof}
\end{remark}

\section{Numerical Examples}\label{sec:simulations}

In this  section we consider two application scenarios: an optical network OSNR game and a sensor network game. In both examples, our algorithms are compared with the full and partial-information gradient-play in the presence of disturbances.
\vspace{-0.3cm}
\subsection{OSNR Game}
%

Consider an optical signal-to-noise ratio (OSNR) model for  wavelength-division multiplexing (WDM) links \cite{lp12}, where 10 channels, $\mathcal{I} = \{1,\dots,10\}$, are transmitted over an optically amplified link. We consider each channel as an agent and denote each agent's transmitting power as $x_i$, while  the noise power of each channel as $n_i^0$. 
Each agent attempts to maximize its OSNR on its channel by adjusting its transmission power. Each agent has a cost function as in \cite{pp09}, given by \vspace{-0.2cm}
$$
\!\!J_i(x_i,x_{-i}) \!=\! a_ix_i\!+\!\frac{1}{P^0\!-\!\sum_{j \in \mathcal{I}}\!x_j}
\!-\!b_i\!\ln\!\Big(1\!+\!c_i \!\frac{x_i}{n_i^0\!+\!\!\sum_{ j \neq i }\!\Gamma_{ij} x_j}\Big)  
$$
where $a_i>0$ is a pricing parameter, $P^0$ is the total power target of the link, $b_i>0$, and $\Gamma = [\Gamma_{ij}]$ is the link system matrix, with parameters 
as in \cite{sp16}. Each channel (agent) has dynamics (\ref{eq:Case1a}), where  the disturbance is generated due to the pilot-tones used for network tracing and monitoring,  \cite{tp06}, which take the form of a sinusoidal signal with a unique frequency assigned for each channel and unknown modulation. Thus $d_i = P^0[1+m_i\sin(2\pi f_it)]$,  where $m_i = 0.1i$ (unknown modulation index) and frequency $f_i = 10i$ kHz,  $i \in \mathcal{I}$. 
\begin{figure}[h!]
 \vspace{-0.26cm}
 	\centering
	\includegraphics[trim=0cm 0cm 0cm 0cm,width=2.2in]{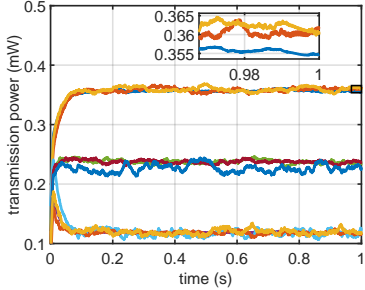}
	\caption{Gradient-play dynamics \eqref{eq:GradDyn} subject to disturbances}\label{fig:4b}
	 \vspace{-0.26cm}
\end{figure}
\begin{figure}[h!]
 \vspace{-0.26cm}
 	\centering
	\includegraphics[trim=0cm 0cm 0cm 0cm,width=2.2in]{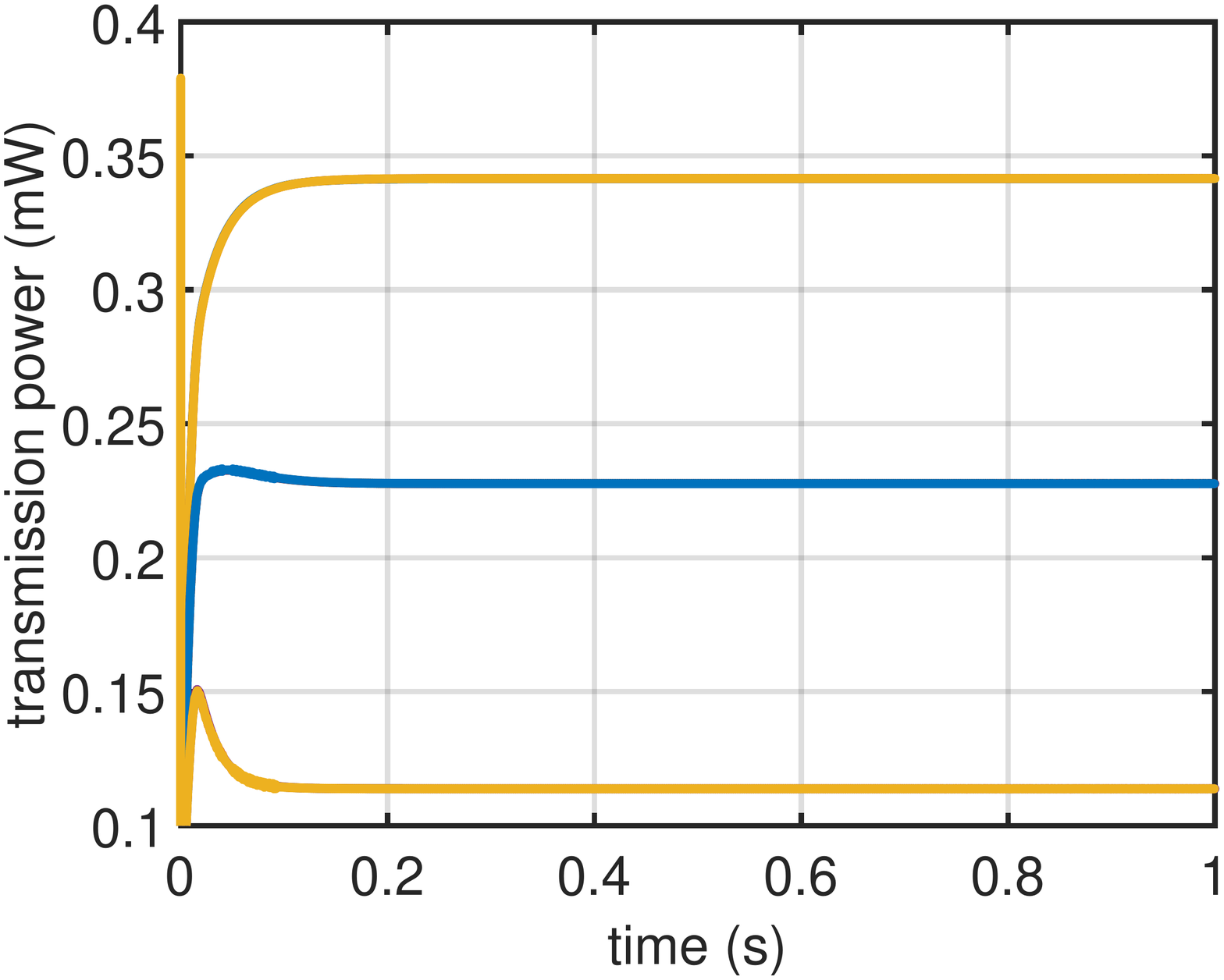}
	 \vspace{-0.26cm}
	\caption{Agent dynamics $\Sigma_i$ \eqref{eq:AgentFullInfo} subject to disturbances}\label{fig:4c}
\end{figure}
\begin{figure}[h!]
\vspace{-0.26cm}
\centering
\includegraphics[ width =4.5cm]{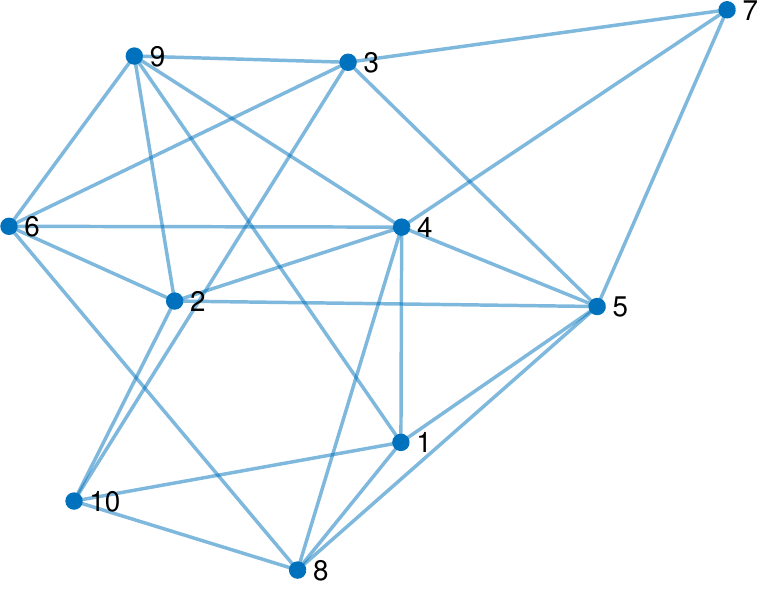}
\caption{Random communication graph, $G_c$, $\lambda_2 = 2.6158$}
\label{fig:GraphOSNR}
\end{figure}
First, we consider that each agent has full information about the others' actions and we compare the results of agent dynamics,  (\ref{eq:AgentFullInfo}),  with a standard gradient-play scheme  \eqref{eq:GradDyn}. As seen in  Fig. \ref{fig:4b} and Fig. \ref{fig:4c},  \eqref{eq:GradDyn} do not the reject disturbances (sustained fluctuations in the OSNR values), while   (\ref{eq:AgentFullInfo}) successfully reject disturbances and converge to the NE found in \cite{sp16}. Next, assume each agent has partial information over a random  graph, $G_c$, Fig. \ref{fig:GraphOSNR}. The results of dynamics  (\ref{eq:AgentPartialInfo}) are plotted in  Fig. \ref{fig:5c}, while those of the Laplacian-based gradient dynamics \eqref{eq:GradDynPartialInfo} are shown in  Fig. \ref{fig:5b}, with similar comparison.

\begin{figure}[h!]
\vspace{-0.26cm}
	\centering
	\includegraphics[trim=0cm 0cm 0cm 0cm,width=2.2in]{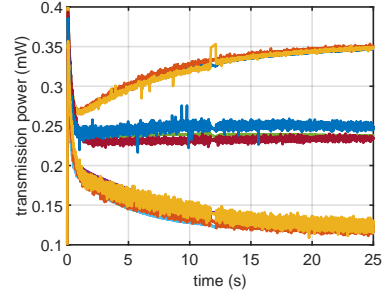}
	\caption{Laplacian-based dynamics \eqref{eq:GradDynPartialInfo} over $\!G_c\!$}\label{fig:5b} 
	\vspace{-0.26cm}
\end{figure}
\begin{figure}[h!]
\vspace{-0.26cm}
	\centering
	\includegraphics[trim=0cm 0cm 0cm 0cm,width=2.2in]{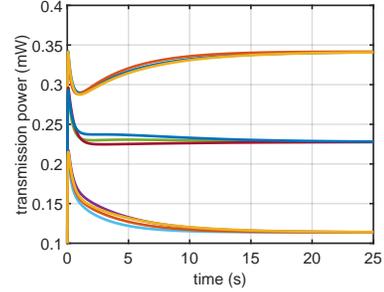}
	\caption{Agent dynamics  $\!\Sigma_i\!$ \eqref{eq:AgentPartialInfo} subject to disturbances over  $\!G_c\!$}\label{fig:5c}
	\vspace{-0.26cm}
\end{figure}

\vspace{-0.26cm}
\subsection{Sensor Networks}
Our next example is similar to the one investigated in \cite{sjs12}. However, our algorithm uses a continuous-time gradient-play inspired feedback instead of the discrete-time extremum seeking algorithm used in \cite{sjs12}. It is also important to note that while \cite{sjs12} considers noisy feedbacks, it does not consider disturbance rejection as we have posed it here.

Consider a group of five mobile robots in the plane in a sensor network. Each agent has a cost function that is a function of all robots' positions, $(x_i,x_{-i})$, 
\vspace{-0.25cm}
\begin{align}
J_i(x_i,x_{-i}) = x_i^Tx_i+x_i^Tr_i+\sum_{j \in \mathcal{I}} \|x_i-x_j\|^2
\end{align}
where $r_1 \!=\! \col(2,-2)$, $r_2 \!=\! \col(-2,-2)$, $r_3 \!=\! \col(-4,2)$, $r_4\! =\! \col(2,-4)$, and $r_5 \! =\! \col(3,3)$. We consider two types, 
velocity actuated and force-actuated robots, and  
in each case we consider the full-information and the partial-information case with communication over a random graph, $G_c$ (Fig. \ref{fig:Gc11}).
\begin{figure}[h!]
	\vspace{-0.26cm}
	\centering
	\includegraphics[trim=0cm 0cm 0cm 0cm,width=1.5in]{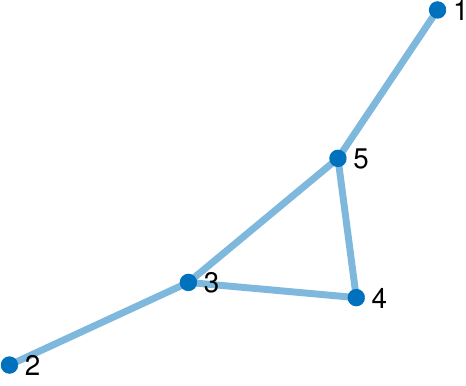}
	\caption{Random Communication Graph, $G_c$}\label{fig:Gc1}
\end{figure}
\subsubsection{Velocity-Actuated Robots}
Consider that each agent in the network is a velocity-actuated robot with dynamics given by (\ref{eq:Case1a}), 
where $d_i = \col(0.5,0)$ is a constant disturbance. 


\begin{figure}[h!]
 \vspace{-0.3cm}	\centering
	\includegraphics[trim=0cm 0cm 0cm 0cm,width=2.8in]{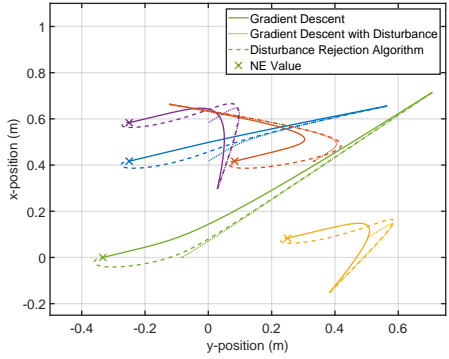}
	\caption{Comparison of \eqref{eq:AgentFullInfo} and \eqref{eq:GradDyn} for single-integrator agents}\label{fig:1}
\end{figure}

We consider first that each agent has full information about the other's actions and compare our algorithm \eqref{eq:AgentFullInfo} to the standard gradient-play \eqref{eq:GradDyn}. In Fig. \ref{fig:1} solid-lines depict gradient-play results in the disturbance-free case. In the presence of disturbances, as seen in Fig. \ref{fig:1}, \eqref{eq:AgentFullInfo} (dashed-lines) converges to the same NE values, while the standard gradient-play (dotted-lines) does not. 
\begin{figure}[h!]
 \vspace{-0.3cm}		\centering
	\includegraphics[trim=0cm 0cm 0cm 0cm,width=2.8in]{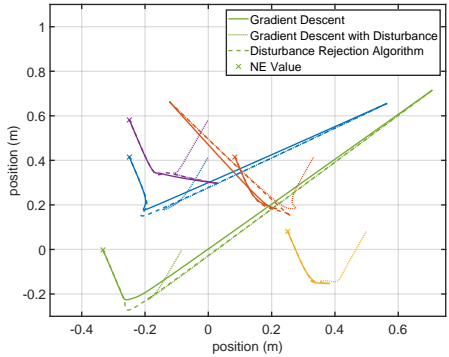}
	\caption{Comparison of $\!$ \eqref{eq:AgentPartialInfo} $\&$ \eqref{eq:GradDynPartialInfo} 
	for single-integrator agents}\label{fig:2}
	 \vspace{-0.3cm}	
\end{figure}
Next consider that each agent only has partial-information communicated over a graph, $G_c$. We compare our algorithm \eqref{eq:AgentPartialInfo} to that of the Laplacian based-gradient dynamics \eqref{eq:GradDynPartialInfo} in Fig. \ref{fig:2}, where solid-lines depict  \eqref{eq:GradDynPartialInfo} results in the disturbance-free case. In the presence of disturbances, Fig. \ref{fig:2} shows that   \eqref{eq:AgentPartialInfo} (dashed-lines) converges to the same NE values as found by the full-information case, Fig. \ref{fig:1}, while  \eqref{eq:GradDynPartialInfo} (dotted-lines) does not. 
\subsubsection{Force-Actuated Robots} Consider that each agent is modelled as double integrator, \eqref{eq:DoubleIntegratorDist} 
where $d_i = \col(0.5,0)$. 
The corresponding results  are shown in Fig. \ref{fig:3} (full information case) and  Fig. \ref{fig:4} (partial-information over $G_c$), where dashed-lines correspond to \eqref{eq:doubleIntDynDist} and    \eqref{eq:doubleIntDynDistPart}, respectively, while dotted-lines to the disturbance-free learning algorithm.

\begin{figure}[h!]
 \vspace{-0.3cm}	
	\centering
	\includegraphics[trim=0cm 0cm 0cm 0cm,width=2.8in]{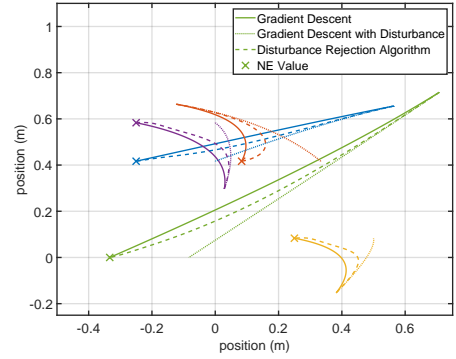}
	\caption{Results 
	of  \eqref{eq:doubleIntDynDist} for double-integrator agents}\label{fig:3}
	 \vspace{-0.3cm}	
\end{figure}

\begin{figure}[h!]
 \vspace{-0.3cm}	
	\centering
	\includegraphics[trim=0cm 0cm 0cm 0cm,width=2.8in]{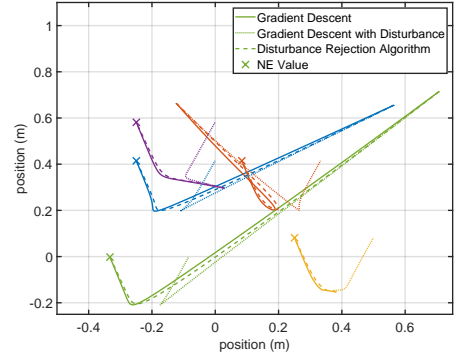}
	\caption{Results of \eqref{eq:doubleIntDynDistPart} for double-integrator agents}\label{fig:4}  
	 \vspace{-0.3cm}	
\end{figure}

\begin{remark}
Although we did not specifically investigate systems with noisy feedbacks, it is possible to show that due to their ISS properties, the dynamics \eqref{eq:AgentPartialInfo} and \eqref{eq:doubleIntDynDistPart} have a certain amount of robustness to feedback noise, such as the type investigated in \cite{sjs12} and \cite{ms17}. The ISS property implies that for any bounded feedback noise, the steady-state solution will remain in a neighbourhood of the NE.
\end{remark}

\vspace{-0.3cm}
\section{Conclusions}\label{sec:conclusions}
We considered Nash equilibrium seeking schemes for (multi)-integrator agents subject to external disturbances. We addressed the case of  full information  on the others' decisions, as well as the case where agents have partial-decision information, based on local observation and communication.  In both cases, we proposed new   continuous-time dynamic schemes that converge to the Nash equilibrium, irrespective of the disturbance.  Besides a gradient-play component,  the proposed agent dynamics have a dynamic internal-model component, 
and, in the case of partial-information,  a consensus component that drives agents to reach the decision-estimate consensus subspace. 


\bibliographystyle{IEEETran}
\bibliography{references}

\begin{thebibliography}{10}
\providecommand{\url}[1]{#1}
\csname url@samestyle\endcsname
\providecommand{\newblock}{\relax}
\providecommand{\bibinfo}[2]{#2}
\providecommand{\BIBentrySTDinterwordspacing}{\spaceskip=0pt\relax}
\providecommand{\BIBentryALTinterwordstretchfactor}{4}
\providecommand{\BIBentryALTinterwordspacing}{\spaceskip=\fontdimen2\font plus
\BIBentryALTinterwordstretchfactor\fontdimen3\font minus
  \fontdimen4\font\relax}
\providecommand{\BIBforeignlanguage}[2]{{%
\expandafter\ifx\csname l@#1\endcsname\relax
\typeout{** WARNING: IEEEtran.bst: No hyphenation pattern has been}%
\typeout{** loaded for the language `#1'. Using the pattern for}%
\typeout{** the default language instead.}%
\else
\language=\csname l@#1\endcsname
\fi
#2}}
\providecommand{\BIBdecl}{\relax}
\BIBdecl

\bibitem{faawb06}
X.~Fan, T.~Alpan, M.~Arcak, T.~Wen, and T.~Ba\c{s}ar, ``A passivity approach to
  game-theoretic {CDMA} power control,'' \emph{Automatica}, vol.~42, pp.
  1837--1847, 2006.

\bibitem{lh10}
H.~Li and Z.~Han, ``Competitive spectrum access in cognitive radio networks:
  Graphical game and learning,'' in \emph{Proc. of the 2010 IEEE WCNC}, Apr.
  2010, pp. 1--6.

\bibitem{ch12}
X.~Chen and J.~Huang, ``Spatial spectrum access game: Nash equilibria and
  distributed learning,'' in \emph{Proc. of the 13th ACM MobiHoc}, Jun. 2012,
  pp. 205--214.

\bibitem{lp12}
L.~Pavel, \emph{Game Theory for Control of Optical Networks}, 3rd~ed., ser.
  Static \& Dynamic Game Theory: Foundations \& Applications.\hskip 1em plus
  0.5em minus 0.4em\relax Birkh{\"a}user-Springer Science, 2012.

\bibitem{lp06}
------, ``A noncooperative game approach to {OSNR} optimization in optical
  networks,'' \emph{IEEE Trans. Autom. Control}, vol.~51, no.~5, pp. 848--852,
  May 2006.

\bibitem{mwjsl10}
A.-H. Mohsenian-Rad, V.~W. Wong, J.~Jatskevich, R.~Schober, and A.~Leon-Garcia,
  ``Autonomous demand-side management based on game-theoretic energy
  consumption scheduling for the future smart grid,'' \emph{IEEE Trans. Smart
  Grid}, vol.~1, no.~3, pp. 320--331, Dec. 2010.

\bibitem{sg16c}
S.~Grammatico, ``Exponentially convergent decentralized charging control for
  large populations of plug-in electric vehicles,'' in \emph{Proc. of the 55th
  IEEE CDC}, Dec. 2016, pp. 5775--5780.

\bibitem{hi13}
H.~Ito, ``Disturbance and delay robustness guarantees of gradient systems based
  on static noncooperative games with an application to feedback control for
  {PEV} charging load allocation,'' \emph{IEEE Trans. Control Syst. Technol.},
  vol.~21, no.~4, pp. 1374--1385, Jul. 2013.

\bibitem{ysm11}
H.~Yin, U.~V. Shanbhag, and P.~G. Mehta, ``Nash equilibrium problems with
  scaled congestion costs and shared constraints,'' \emph{IEEE Trans. Autom.
  Control}, vol.~56, no.~7, pp. 1702--1708, Jul. 2011.

\bibitem{ab05}
T.~Alpcan and T.~Ba{\c{s}}ar, \emph{Distributed Algorithms for {N}ash
  Equilibria of Flow Control Games}.\hskip 1em plus 0.5em minus 0.4em\relax
  Birkh{\"a}user Boston, 2005.

\bibitem{lqs14}
W.~Lin, Z.~Qu, and M.~A. Simaan, ``Distributed game strategy design with
  application to multi-agent formation control,'' in \emph{Proc. of the 53rd
  IEEE CDC}, Dec. 2014, pp. 433--438.

\bibitem{FKB12}
P.~Frihauf, M.~Krstic, and T.~Ba{\c{s}}ar, ``{Nash Equilibrium Seeking in
  Noncooperative Games},'' \emph{IEEE Trans. on Automatic Control}, vol.~57,
  no.~5, pp. 1192--1207, 2012.

\bibitem{tp06}
Y.~Taing and L.~Pavel, ``An {EDFA} {$H_\infty$} controller for suppression of
  power excursions due to pilot tones and network traffic,'' \emph{IEEE Photon.
  Technol. Lett.}, vol.~18, no.~18, pp. 1916--1918, Sep. 2006.

\bibitem{sjs12}
M.~S. Stankovi{c}, K.~H. Johansson, and D.~M. Stipanoni{c}, ``Distributed
  seeking of {N}ash equilibria with application to mobile sensor networks,''
  \emph{IEEE Trans. Autom. Control}, vol.~57, no.~4, pp. 904--919, Apr. 2012.

\bibitem{zm13}
M.~Zhu and S.~Martinez, ``Distributed coverage games for energy-aware mobile
  sensor networks,'' \emph{SIAM J. Control Optim.}, vol.~51, no.~1, pp. 1--27,
  2013.

\bibitem{ms17}
P.~Mertikopoulos and M.~Staudigl, ``Convergence to {N}ash equilibrium in
  continuous games with noisy first-order feedback,'' in \emph{Proc. of the
  56th IEEE CDC}, Dec. 2017, pp. 5609--5614.

\bibitem{dl19}
Z.~Deng and S.~Liang, ``Distributed algorithms for aggregative games of
  multiple heterogeneous {E}uler-{L}agrange systems,'' \emph{Automatica},
  vol.~99, pp. 246--252, 2019.

\bibitem{bdp15}
M.~B{\"u}rger and C.~D. Persis, ``Dynamic coupling design for nonlinear output
  agreement and time-varying flow control,'' \emph{Automatica}, vol.~51, pp.
  210--222, 2015.

\bibitem{dpj14}
C.~D. Persis and B.~Jayawardhana, ``On the internal model principle in the
  coordination of nonlinear systems,'' \emph{IEEE Trans. Control Netw. Syst.},
  vol.~1, no.~3, pp. 272--282, Sep. 2015.

\bibitem{zd13}
Z.~Ding, ``Consensus output regulation of a class of heterogeneous nonlinear
  systems,'' \emph{IEEE Trans. Autom. Control}, vol.~49, no.~12, pp.
  2244--2247, Dec. 2004.

\bibitem{xwhj16}
D.~Xu, X.~Wang, Y.~Hong, and Z.-P. Jiang, ``Global robust distributed output
  consensus of multi-agent nonlinear systems: An internal model approach,''
  \emph{Systems \& Control Letters}, vol.~87, pp. 64--69, 2016.

\bibitem{xlh17}
J.~Xiang and Y.~L. ad~David J.~Hill, ``Cooperative output regulation of linear
  multi-agent network systems with dynamic edges,'' \emph{Automatica}, vol.~77,
  pp. 1--13, 2017.

\bibitem{wyh14}
X.~Wang, P.~Yi, and Y.~Hong, ``Dynamic optimization for multi-agent systems
  with external disturbances,'' \emph{Control Theory Tech}, vol.~12, no.~2, pp.
  132--138, May 2014.

\bibitem{whj16}
X.~Wang, Y.~Hong, and H.~Ji, ``Distributed optimization for a class of
  nonlinear multiagent systems with disturbance rejection,'' \emph{IEEE Trans.
  Cybern.}, vol.~46, no.~7, pp. 1655--1666, Jul. 2016.

\bibitem{twy18}
N.-T. Trand, Y.-W. Wang, and W.~Yang, ``Distributed optimization problem for
  double-integrator systems with the presence of the exogenous disturbance,''
  \emph{Neurocomputing}, vol. 272, pp. 386--395, Jan. 2018.

\bibitem{CortesSICON2016}
D.~Mateos-Nunes and J.~Cortes, ``Noise-to-state exponentially stable
  distributed convex optimization on weight-balanced digraph,'' \emph{SIAM J.
  Control Optim.}, vol.~54, no.~1, pp. 266--290, 2016.

\bibitem{gp18}
D.~Gadjov and L.~Pavel, ``A passivity-based approach to nash equilibrium
  seeking over networks,'' \emph{IEEE Transactions on Automatic Control},
  vol.~64, no.~3, pp. 1077--1092, 2019.

\bibitem{AR_LP_CDC2018}
A.~Romano and L.~Pavel, ``Dynamic gradient play for {NE} seeking with
  disturbance rejection,'' in \emph{Proc. of 57th IEEE CDC}, Dec. 2018, pp.
  346--351.

\bibitem{hk02}
H.~K. Khalil, \emph{Nonlinear Systems}, 3rd~ed.\hskip 1em plus 0.5em minus
  0.4em\relax Princeton Hall, 2002.

\bibitem{gr01}
C.~Godsil and G.~Royle, \emph{Algebraic Graph Theory}, ser. Graduate Texts in
  Mathematics.\hskip 1em plus 0.5em minus 0.4em\relax Springer New York, 2001.

\bibitem{sfpp14}
G.~Scutari, F.~Facchinei, J.-S. Pang, and D.~P. Pallomar, ``Real and complex
  monotone communication games,'' \emph{IEEE Trans. Inf. Theory}, vol.~60,
  no.~7, pp. 400--409, Jul. 2014.

\bibitem{Flam02}
S.~Fl{\aa}m, ``{Equilibrium, evolutionary stability and gradient dynamics},''
  \emph{International Game Theory Review}, vol.~4, no.~04, pp. 357--370, 2002.

\bibitem{ib90}
A.~Isidori and C.~I. Byrnes, ``Output regulation of nonlinear systems,''
  \emph{IEEE Trans. Autom. Control}, vol.~35, no.~2, pp. 131--140, Feb. 1990.

\bibitem{ai95}
A.~Isidori, \emph{Nonlinear Control Systems}, 3rd~ed., ser. Communication and
  Control Engineering.\hskip 1em plus 0.5em minus 0.4em\relax Springer, 1995.

\bibitem{ma07}
L.~Marconi and A.~Isidori, \emph{A Unifying Approach to the Design of Nonlinear
  Output Regulators}.\hskip 1em plus 0.5em minus 0.4em\relax Springer-Verlag,
  2007.

\bibitem{Attouch2000}
X.~G. H.~Attouch and P.~Redont, ``The heavy ball with friction method, i. the
  continuous dynamical system: Global exploration of the local minima of a
  real-valued function by asymptotic analysis of a dissipative dynamical
  system,'' \emph{Comm. in Contemporary Mathematics}, vol.~2, no.~1, pp. 1--34,
  2000.

\bibitem{ALvarez2000}
F.~Alvarez, ``On the minimizing property of a second order dissipative system
  in {H}ilbert spaces,'' \emph{SIAM J. Control. Optim.}, vol.~38, no.~4, pp.
  1102--1119, 2000.

\bibitem{pp09}
Y.~Pan and L.~Pavel, ``Games with coupled propogated contraints in optical
  networks with multi-link topologies,'' \emph{Automatica}, vol.~45, pp.
  871--880, 2009.

\bibitem{sp16}
F.~Salehisadaghiani and L.~Pavel, ``Distributed {N}ash equilibrium seeking: A
  gossip-based algorithm,'' \emph{Automatica}, vol.~72, pp. 209--216, 2016.

\end{thebibliography}

\end{document}